\documentclass[10pt,leqno,letterpaper]{article}
\usepackage[utf8]{inputenc}
\usepackage{amsmath}
\usepackage{amsfonts}
\usepackage{amssymb}
\usepackage{graphicx}
\usepackage{mathrsfs}
\usepackage{upref,amsthm,amsxtra,exscale}
\usepackage{cite}
\usepackage[colorlinks=true,urlcolor=blue,
citecolor=red,linkcolor=blue,linktocpage,pdfpagelabels,
bookmarksnumbered,bookmarksopen]{hyperref}
\usepackage{fullpage}

\usepackage{subcaption}
\usepackage{caption}

\newtheorem{theorem}{Theorem}[section]

\newtheorem{remark}[theorem]{Remark}

\newtheorem{lemma}[theorem]{Lemma}
\newtheorem{proposition}[theorem]{Proposition}
\newtheorem{definition}[theorem]{Definition}

\numberwithin{equation}{section}

\usepackage{enumitem}

\def\N{\mathbb{N}}
\def\R{\mathbb{R}}
\def\S{\mathbb{S}}
\def\cP{\mathcal{P}}
\def\cM{\mathcal{M}}

\def\cC{\mathcal{C}}

\def\r{\mathbb{R}}
\def\rn{\mathbb{R}^N}
\def\sn{\mathbb{S}^N}

\def\n{\mathbb{N}}

\def\eps{\varepsilon}
\def\o{\Omega}

\def\irn{\int_{\r^N}}

\def\tilde{\widetilde}

\def\cC{\mathcal{C}}

\def\cH{\mathcal{H}}

\def\cM{\mathcal{M}}
\def\cN{\mathcal{N}}
\def\cP{\mathcal{P}}
\def\cJ{\mathcal{J}}
\def\cT{\mathcal{T}}
\def\cU{\mathcal{U}}

\def\sL{\mathscr{L}}
\def\sP{\mathscr{P}}
\def\bf{\mathbf}


\author{Mónica Clapp\footnote{M. Clapp was  supported by CONACYT grant A1-S-10457 (Mexico).}, Juan Carlos Fernández\footnote{J.C. Fernández was supported by a CONACYT postdoctoral fellowship (Mexico).}, and Alberto Saldaña\footnote{
A. Saldaña was supported by UNAM-DGAPA-PAPIIT grant IA101721 (Mexico).}}
\title{Critical polyharmonic systems and optimal partitions}
\date{\today}

\AtEndDocument{\bigskip{\footnotesize%
		\textbf{Mónica Clapp}\par
		\textsc{Instituto de Matemáticas, Universidad Nacional Autónoma de México} \par  
		\text{Circuito Exterior, Ciudad Universitaria, 04510 Coyoacán, Ciudad de México, Mexico}\par
		\textit{E-mail address:} \texttt{monica.clapp@im.unam.mx} \par
		\addvspace{\medskipamount}
		\textbf{Juan Carlos Fernández}\par
		\textsc{Instituto de Matemáticas, Universidad Nacional Autónoma de México} \par  
		\text{Circuito Exterior, Ciudad Universitaria, 04510 Coyoacán, Ciudad de México, Mexico}\par
		\textit{E-mail address:} \texttt{jcfmor@im.unam.mx} 
		\par
		\addvspace{\medskipamount}
		\textbf{Alberto Saldaña}\par
		\textsc{Instituto de Matemáticas, Universidad Nacional Autónoma de México} \par  
		\text{Circuito Exterior, Ciudad Universitaria, 04510 Coyoacán, Ciudad de México, Mexico}\par
		\textit{E-mail address:} \texttt{alberto.saldana@im.unam.mx} 
}}

\begin{document}

\maketitle

\begin{abstract}
We establish the existence of solutions to a weakly-coupled competitive system of polyharmonic equations in $\mathbb{R}^N$ which are invariant under a group of conformal diffeomorphisms, and study the behavior of least energy solutions as the coupling parameters tend to $-\infty$.  We show that the supports of the limiting profiles of their components are pairwise disjoint smooth domains and solve a nonlinear optimal partition problem of $\R^N$. We give a detailed description of the shape of these domains.

\medskip

\noindent{Keywords: Higher-order elliptic systems, optimal partition, critical Sobolev exponent, phase separation.} 

\medskip

\noindent{MSC2020:
35G50 (Primary); 
35G30, 
35B06, 
58J70, 
35B33, 
}
\end{abstract}

\section{Introduction}

Consider the weakly-coupled competitive polyharmonic system of $\ell$ equations in $\mathbb{R}^N$,
\begin{equation} \label{s:intro}
	\begin{cases}
		(-\Delta)^m u_i = \mu_i|u_i|^{2_m^*-2}u_i + \sum\limits_{\substack{j=1 \\ j\neq i}}^\ell\lambda_{ij}\beta_{ij}|u_j|^{\alpha_{ij}}|u_i|^{\beta_{ij}-2}u_i,\quad i=1,\ldots,\ell, \\
		u_i\in D^{m,2}(\rn),\qquad i=1,\ldots,\ell,
	\end{cases}
\end{equation}
where $m,N\in\N$, $N>2m$,  $2_m^*:=\frac{2N}{N-2m}$ is the critical Sobolev exponent, $\mu_i>0$, $\lambda_{ij}=\lambda_{ji}<0$, $\alpha_{ij},\beta_{ij}>1$ satisfying $\alpha_{ij}=\beta_{ji}$ and $\alpha_{ij}+\beta_{ij}=2_m^*$, and $D^{m,2}(\rn)$ is the completion of $\cC_c^\infty(\rn)$ with respect to the norm $\|\cdot\|$ induced by the scalar product
\begin{equation} \label{eq:scalar_product}
\langle u,v\rangle :=
\begin{cases}
\irn\Delta^\frac{m}{2}u\cdot\Delta^\frac{m}{2}v, &\text{for \ }m\text{ even},\\
\irn\nabla\Delta^\frac{m-1}{2}u\cdot\nabla\Delta^\frac{m-1}{2}v, &\text{for \ }m\text{ odd}.
\end{cases}
\end{equation}

In this paper we establish the existence of solutions to \eqref{s:intro} which are invariant under some groups of conformal diffeomorphisms, and study the behavior of least energy solutions as $\lambda_{ij}\to -\infty.$  We show that the supports of the limiting profiles of their components are pairwise disjoint smooth domains and solve a nonlinear optimal partition problem in $\R^N$.

To state our results we introduce some notation. Fix $n_1,n_2\in\n$ with $n_1,n_2\geq 2$ and $n_1+n_2=N+1$, and set $\Gamma:=O(n_1)\times O(n_2)$. Each $\gamma\in\Gamma$ is an isometry of the unit sphere $\sn$, and gives rise to a conformal diffeomorphism $\tilde\gamma:\rn\to\rn$ given by $\tilde\gamma x:=(\sigma\circ\gamma^{-1}\circ\sigma^{-1})(x)$, where $\sigma:\sn\to\rn\cup\{\infty\}$ is the stereographic projection.  A subset $\o$ of $\rn$ will be called \emph{$\Gamma$-invariant} if $\tilde\gamma x\in\o$ for all $x\in\o$, and a function $u:\o\to\r$ will be said to be \emph{$\Gamma$-invariant} if $$|\det\tilde\gamma'(x)|^\frac{1}{2^*_m}u(\tilde\gamma x)=u(x)\qquad \text{for all \ }\gamma\in\Gamma, \ x\in\o.$$

If $\Omega$ is a $\Gamma$-invariant open subset of $\rn$ we write $D_0^{m,2}(\Omega)$ for the closure of $\cC_c^\infty(\Omega)$ in $D^{m,2}(\rn)$ and we use $D_0^{m,2}(\Omega)^\Gamma$ and $D^{m,2}(\rn)^\Gamma$ to denote the subspaces of $\Gamma$-invariant functions in $D_0^{m,2}(\Omega)$ and $D^{m,2}(\rn)$ respectively. Consider the Dirichlet problem
\begin{equation} \label{eq:dirichlet}
\begin{cases}
(-\Delta)^m u = |u|^{2_m^*-2}u,\\
u\in D_0^{m,2}(\Omega)^\Gamma.
\end{cases}
\end{equation}
This problem has a least energy nontrivial solution (see Section \ref{sec:segregation}), whose energy will be denoted by $c^\Gamma_\Omega$, i.e.,
$$c^\Gamma_\Omega:=\inf\left\{\frac{m}{N}\|u\|^2:u\neq 0\text{ and } u\text{ solves }\eqref{eq:dirichlet}\right\}.$$
We consider partitions of $\rn$ by $\Gamma$-invariant open subsets. More precisely, for $\ell\geq 2$, let
\begin{align*}
\cP_\ell^\Gamma:=\{\{\Omega_1,\ldots,\Omega_\ell\}:&\,\Omega_i\neq\emptyset \text{ is a }\Gamma\text{-invariant open subset of }\rn\text{ and }\Omega_i\cap \Omega_j=\emptyset\text{ if }i\neq j \}.
\end{align*}
A \emph{$(\Gamma,\ell)$-optimal partition} for $\mathbb R^N$ is a partition $\{\Omega_1,\ldots,\Omega_\ell\}\in\cP_\ell^\Gamma$ such that
\begin{align}\label{op}
\sum_{i=1}^\ell c^\Gamma_{\Omega_i}=\inf_{\{\Theta_1,\ldots,\Theta_\ell\}\in\cP_\ell^\Gamma}\sum_{i=1}^\ell c^\Gamma_{\Theta_i}. 
\end{align}

We study a symmetric version of \eqref{s:intro}, namely 
\begin{equation} \label{eq:system}
\begin{cases}
(-\Delta)^m u_i = \mu_i|u_i|^{2_m^*-2}u_i + \sum\limits_{\substack{j=1 \\ j\neq i}}^\ell\lambda_{ij}\beta_{ij}|u_j|^{\alpha_{ij}}|u_i|^{\beta_{ij}-2}u_i,\quad i=1,\ldots,\ell, \\
u_i\in D^{m,2}(\rn)^\Gamma,\qquad i=1,\ldots,\ell,
\end{cases}
\end{equation}
where $\lambda_{ij}$, $\mu_i$, $\alpha_{ij}$ and $\beta_{ij}$ are as before. 

Our first result asserts the existence of infinitely many fully nontrivial solutions of \eqref{eq:system}. A solution $(u_1,\ldots,u_\ell)$ to the system \eqref{eq:system} is called \emph{fully nontrivial} if each component $u_i$ is nontrivial. We refer to Definition \ref{def:least energy} for the notion of a least energy fully nontrivial solution.

\begin{theorem} \label{thm:existence}
The system \eqref{eq:system} has a least energy fully nontrivial solution and a sequence of fully nontrivial solutions which is unbounded in $[D^{m,2}(\rn)]^\ell$. 
\end{theorem}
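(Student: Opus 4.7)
The plan is to exploit the variational structure together with the $\Gamma$-invariance to bypass the loss of compactness at the critical exponent. The energy functional associated to \eqref{eq:system} is
\[
J(\bar u)=\frac12\sum_{i=1}^\ell\|u_i\|^2-\frac{1}{2_m^*}\sum_{i=1}^\ell\mu_i\irn|u_i|^{2_m^*}-\sum_{i<j}\lambda_{ij}\irn|u_i|^{\beta_{ij}}|u_j|^{\alpha_{ij}},
\]
defined on $\cH:=[D^{m,2}(\rn)^\Gamma]^\ell$. Because the conformal weight $|\det\tilde\gamma'|^{1/2_m^*}$ is built into the definition of $\Gamma$-invariance, the induced action of $\Gamma$ on $D^{m,2}(\rn)$ is by linear isometries with respect to the scalar product \eqref{eq:scalar_product}, and the coupling integrals are $\Gamma$-invariant. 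Therefore Palais' principle of symmetric criticality guarantees that any critical point of $J|_{\cH}$ is a solution of \eqref{eq:system}.

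Fully nontrivial critical points will be sought on the Nehari-type set
\[
\cN^\Gamma:=\{\bar u\in\cH:\,u_i\neq 0\text{ and }\partial_{u_i}J(\bar u)[u_i]=0\text{ for every }i=1,\ldots,\ell\}.
\]
The negativity of each $\lambda_{ij}$ combined with the homogeneity of the nonlinearities implies that for every $\bar u\in\cH$ with each $u_i\neq 0$ there is a unique $(t_1,\ldots,t_\ell)\in(0,\infty)^\ell$ with $(t_1u_1,\ldots,t_\ell u_\ell)\in\cN^\Gamma$, and this is the unique maximum of $(s_1,\ldots,s_\ell)\mapsto J(s_1 u_1,\ldots,s_\ell u_\ell)$ on the open positive octant. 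This endows $\cN^\Gamma$ with the structure of a natural constraint (as in Szulkin--Weth), and a direct computation using the Nehari identities yields $J(\bar u)=\frac{m}{N}\sum_i\|u_i\|^2$ on $\cN^\Gamma$, so $c^\Gamma:=\inf_{\cN^\Gamma}J>0$.

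The main obstacle is the lack of compactness of $D^{m,2}(\rn)\hookrightarrow L^{2_m^*}(\rn)$. Here the choice $\Gamma=O(n_1)\times O(n_2)$ with $n_1,n_2\geq 2$ is decisive: every orbit of $\Gamma$ on $\sn$ has positive dimension, so no point of $\rn\cup\{\infty\}$ is fixed by all of $\Gamma$. Adapting the Struwe--Lions bubble decomposition to the equivariant setting then shows that any concentration profile of a $\Gamma$-invariant Palais--Smale sequence would have to be centered at a $\Gamma$-fixed point, which is excluded. This yields the local Palais--Smale condition for $J|_{\cH}$ at the levels relevant for the minimax arguments below.

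With this at hand, a least energy fully nontrivial solution is obtained by minimizing $J$ on $\cN^\Gamma$: minimizing sequences are bounded because of the Nehari identity $J=\frac{m}{N}\sum_i\|u_i\|^2$, and the compactness just discussed yields a convergent subsequence. For the unbounded sequence, observe that $J$ and $\cN^\Gamma$ are invariant under the free $\z_2^\ell$-action $(u_1,\ldots,u_\ell)\mapsto(\pm u_1,\ldots,\pm u_\ell)$. Applying a Ljusternik--Schnirelmann / Krasnoselskii-genus argument on the quotient $\cN^\Gamma/\z_2^\ell$ produces a sequence of critical values $c_k\to\infty$; since on $\cN^\Gamma$ the energy is a positive multiple of $\sum_i\|u_i\|^2$, the corresponding critical points form an unbounded sequence in $[D^{m,2}(\rn)]^\ell$.
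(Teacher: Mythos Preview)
Your proposal has a genuine gap in the treatment of the Nehari-type set. You assert that for \emph{every} $\bar u\in\cH$ with all $u_i\neq 0$ there exists a unique $(t_1,\ldots,t_\ell)\in(0,\infty)^\ell$ projecting $\bar u$ onto $\cN^\Gamma$, realized as the maximum of the fibering map. This fails for competitive systems once $|\lambda_{ij}|$ is not small. Take $\ell=2$, $\alpha_{12}=\beta_{12}=2^*_m/2$, $\mu_1=\mu_2=1$, and $u_1=u_2=u\neq 0$: on the diagonal $s_1=s_2=s$ the constraint reads $\|u\|^2=s^{2^*_m-2}\big(1-\tfrac{2^*_m}{2}|\lambda_{12}|\big)\irn|u|^{2^*_m}$, which has no solution when $|\lambda_{12}|\geq 2/2^*_m$, and a direct check rules out off-diagonal solutions as well. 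The underlying reason is that, since $\alpha_{ij}+\beta_{ij}=2^*_m$, the competitive coupling contributes a \emph{positive} term of the same homogeneity as the $\mu_i$-terms to $J(\bar s\,\bar u)$; for overlapping components it can dominate, so $J(\bar s\,\bar u)\to+\infty$ along some rays and the fibering map has no maximum. Thus $\cN^\Gamma$ is not a natural constraint in the Szulkin--Weth sense, and one cannot run Ljusternik--Schnirelmann directly on it.

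The paper circumvents this via the device of \cite{cs}: one works on the open (possibly proper) subset $\cU$ of the product of unit spheres where a projection onto $\cN$ exists, studies the reduced functional $\Psi(\bar u):=\cJ(\bar s_{\bar u}\bar u)$, shows $\Psi\to+\infty$ at $\partial\cU$ so the pseudogradient flow stays in $\cU$, and proves $\mathrm{genus}(\cU)=\infty$ separately using $\Gamma$-invariant test functions with pairwise disjoint supports (for which the projection is trivial). As a side remark, your compactness argument via an equivariant bubble decomposition is more elaborate than needed: the paper simply uses that $D^{m,2}(\rn)^\Gamma\hookrightarrow L^{2^*_m}(\rn)$ is compact (Lemma~\ref{Lemma:Sobolev}, from \cite{BaScWe}), which yields the Palais--Smale condition at every level without any profile analysis.
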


Our next result describes the segregation behavior of least energy fully nontrivial solutions as $\lambda_{ij}\to -\infty$, showing that the supports of the limiting profiles of their components solve the optimal partition problem \eqref{op}. We write $\mathbb{S}^{d-1}$ and $\mathbb{B}^d$ for the unit sphere and the open unit ball in $\r^d$. The symbol $\cong$ stands for ``is $\Gamma$-diffeomorphic to".

\begin{theorem} \label{thm:main}
For $i=1,\ldots,\ell$, fix $\mu_i=1$  and for each $i\neq j$, $k\in\n$, let $\lambda_{ij,k}<0$ be such that $\lambda_{ij,k}=\lambda_{ji,k}$ and $\lambda_{ij,k}\to -\infty$ as $k\to\infty$. Let $(u_{k,1},\ldots,u_{k,\ell})$ be a least energy fully nontrivial solution to the system \eqref{eq:system} with $\lambda_{ij}=\lambda_{ij,k}$. Then, after passing to a subsequence, we have that
\begin{itemize}
\item[$(a)$]$u_{k,i}\to u_{\infty,i}$ strongly in $D^{m,2}(\rn)$, $u_{\infty,i}\in\cC^{m-1}(\rn)$, and $u_{\infty,i}\neq 0$.  Let 
\begin{align*}
 \Omega_i:=\operatorname{int}\overline{\{x\in\rn:u_{\infty,i}(x)\neq 0\}}\qquad \text{ for \ }i=1,\ldots,\ell.
\end{align*}
Then $u_{\infty,i}\in \cC^{2m,\alpha}(\overline\Omega_i)$ is a least energy solution of \eqref{eq:dirichlet} in $\Omega_i$ for each $i=1,\ldots,\ell$.
\item[$(b)$]$\{\Omega_1,\ldots,\Omega_\ell\}\in\cP_\ell^\Gamma$ is a $(\Gamma,\ell)$-optimal partition for $\mathbb R^N$.
\item[$(c)$]$\Omega_1,\ldots,\Omega_\ell$ are smooth and connected, $\overline{\Omega_1\cup\cdots\cup \Omega_\ell}=\rn$ and, after relabeling, we have that
\begin{itemize}
\item[$(c_1)$] $\Omega_1\cong\mathbb{S}^{n_1-1}\times \mathbb{B}^{n_2}$, \ $\Omega_i\cong\mathbb{S}^{n_1-1}\times\mathbb{S}^{n_2-1}\times(0,1)$ if  $i=2,\ldots,\ell-1$, and \ $\Omega_\ell\cong\mathbb{B}^{n_1}\times \mathbb{R}^{n_2-1}$,
\item[$(c_2)$] $\overline{\Omega}_i\cap \overline{\Omega}_{i+1}\cong\mathbb{S}^{n_1-1}\times\mathbb{S}^{n_2-1}$ and\quad $\overline{\Omega}_i\cap \overline{\Omega}_j=\emptyset$\, if\, $|j-i|\geq 2$.
\end{itemize}
\end{itemize}
\end{theorem}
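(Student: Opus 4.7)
The plan is to follow the standard three-stage segregation strategy: uniform bounds and extraction of weak limits; strong convergence with identification of the limit as a least-energy Dirichlet solution on its positivity set (yielding $(a)$ and $(b)$); and geometric analysis via the $\Gamma$-orbit space on $\sn$ (yielding $(c)$). For the uniform bound, fix any $\{\Theta_1,\ldots,\Theta_\ell\}\in\cP_\ell^\Gamma$ and let $w_i\in D_0^{m,2}(\Theta_i)^\Gamma$ be a least energy solution of \eqref{eq:dirichlet} on $\Theta_i$, extended by zero to $\rn$. Since the supports are pairwise disjoint, every coupling term vanishes and $(w_1,\ldots,w_\ell)$ is a fully nontrivial critical point of the energy $\cJ_k$ of \eqref{eq:system} with $\cJ_k(w_1,\ldots,w_\ell)=\sum_i c^\Gamma_{\Theta_i}$; hence the least-energy level satisfies $c_k\le\sum_i c^\Gamma_{\Theta_i}$ uniformly in $k$. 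A Nehari-type manipulation using $\alpha_{ij}+\beta_{ij}=2_m^*$ and $\lambda_{ij}=\lambda_{ji}$ gives $\cJ_k=\tfrac{m}{N}\sum_i\|u_i\|^2$ at any critical point, so $(u_{k,i})$ is bounded in $D^{m,2}(\rn)^\Gamma$ and, up to a subsequence, $u_{k,i}\rightharpoonup u_{\infty,i}$. Testing the $i$-th equation of \eqref{eq:system} with $u_{k,i}$ yields
$$\sum_{j\neq i}|\lambda_{ij,k}|\beta_{ij}\int_{\rn}|u_{k,j}|^{\alpha_{ij}}|u_{k,i}|^{\beta_{ij}}\le \|u_{k,i}\|^2\le C,$$
so these mixed integrals tend to zero as $|\lambda_{ij,k}|\to\infty$, forcing $u_{\infty,i}u_{\infty,j}=0$ a.e.\ for $i\neq j$.

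The $\Gamma$-symmetric compact embedding $D^{m,2}(\rn)^\Gamma\hookrightarrow L^{2_m^*}(\rn)$, established earlier in the paper (using that every principal $\Gamma$-orbit on $\sn$ is positive-dimensional), gives $u_{k,i}\to u_{\infty,i}$ strongly in $L^{2_m^*}$; testing the $i$-th equation with $u_{k,i}-u_{\infty,i}$ and combining with the nonpositivity of the coupling contributions then yields $\|u_{k,i}\|\to\|u_{\infty,i}\|$, hence strong convergence in $D^{m,2}(\rn)$. Setting $\Omega_i:=\operatorname{int}\overline{\{u_{\infty,i}\neq 0\}}$, these sets are pairwise disjoint and $\Gamma$-invariant; passing to the limit in \eqref{eq:system} with $\vp\in\cC_c^\infty(\Omega_i)^\Gamma$ (on whose support all $u_{\infty,j}$, $j\neq i$, vanish), the coupling contributions vanish by a standard Hölder argument employing the segregation bound above, so $u_{\infty,i}\in D_0^{m,2}(\Omega_i)^\Gamma$ is a nontrivial weak solution of \eqref{eq:dirichlet}. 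In particular $c^\Gamma_{\Omega_i}\le\tfrac{m}{N}\|u_{\infty,i}\|^2$, and combining with the partition upper bound gives
$$\sum_i c^\Gamma_{\Omega_i}\le \tfrac{m}{N}\sum_i\|u_{\infty,i}\|^2=\lim_{k\to\infty}c_k\le \sum_i c^\Gamma_{\Theta_i}$$
for every $\{\Theta_i\}\in\cP_\ell^\Gamma$. Since $\{\Omega_i\}\in\cP_\ell^\Gamma$, equality holds throughout; this proves $(b)$ and forces $\tfrac{m}{N}\|u_{\infty,i}\|^2=c^\Gamma_{\Omega_i}$, so $u_{\infty,i}$ is a least energy solution on $\Omega_i$. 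Polyharmonic elliptic bootstrap then completes $(a)$ once the boundary smoothness from $(c)$ is in hand.

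For $(c)$, the action of $\Gamma=O(n_1)\times O(n_2)$ on $\sn=\mathbb{S}^{n_1-1}\ast\mathbb{S}^{n_2-1}$ has orbit space $[0,\pi/2]$, principal orbits diffeomorphic to $\mathbb{S}^{n_1-1}\times\mathbb{S}^{n_2-1}$, and singular orbits $\mathbb{S}^{n_1-1}$, $\mathbb{S}^{n_2-1}$ at the endpoints. Via $\sigma$, each $\Gamma$-invariant open subset of $\rn$ corresponds to a disjoint union of $\Gamma$-tubes over open subintervals of $(0,\pi/2]$. Each $\Omega_i$ is connected: if $\Omega_i=A\sqcup B$ with $A,B$ disjoint $\Gamma$-invariant open sets, the decoupled Dirichlet problem on $A\sqcup B$ has its least energy $\min(c^\Gamma_A,c^\Gamma_B)$ attained by a function supported in only one of $A, B$, contradicting $\Omega_i=\operatorname{int}\overline{\{u_{\infty,i}\neq 0\}}$. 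Moreover $\overline{\bigcup_i\Omega_i}=\rn$ by a rearrangement/strict-monotonicity argument: any $\Gamma$-tube disjoint from $\bigcup_i\Omega_i$ could be annexed to a neighboring $\Omega_i$ without increasing the total, and a strict decrease follows from unique continuation for the polyharmonic operator, contradicting optimality. Consequently the $\Omega_i$'s correspond to exactly $\ell$ tiling intervals, which after relabeling are $[0,a_1), (a_1,a_2), \ldots, (a_{\ell-1},\pi/2]$; applying $\sigma$ reads off $(c_1)$ (the endpoint intervals each contain one singular orbit, producing $\mathbb{S}^{n_1-1}\times\mathbb{B}^{n_2}$ and $\mathbb{B}^{n_1}\times\r^{n_2-1}$, while interior intervals produce $\mathbb{S}^{n_1-1}\times\mathbb{S}^{n_2-1}\times(0,1)$), and $(c_2)$ follows because adjacent tubes meet in exactly one principal orbit. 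I expect the main obstacles to be the strong-convergence step at the critical Sobolev exponent, handled through the $\Gamma$-compactness, and the tiling/strict-monotonicity argument in $(c)$.
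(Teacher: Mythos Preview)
Your overall strategy matches the paper's, but two steps have genuine gaps that are specific to the higher-order setting.

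\medskip

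\textbf{Strong convergence and the limiting PDE.} Testing the $i$-th equation with $u_{k,i}-u_{\infty,i}$ produces the coupling term
\[
\lambda_{ij,k}\,\beta_{ij}\int_{\rn}|u_{k,j}|^{\alpha_{ij}}|u_{k,i}|^{\beta_{ij}-2}u_{k,i}(u_{k,i}-u_{\infty,i}),
\]
and only the part coming from $u_{k,i}$ has a definite sign. The cross piece involving $u_{\infty,i}$ is bounded by $|\lambda_{ij,k}|^{1/\beta_{ij}}$ times a quantity tending to zero at an \emph{uncontrolled} rate, so ``nonpositivity of the coupling contributions'' does not close the argument; the same obstruction affects your ``passing to the limit with $\vp\in\cC_c^\infty(\Omega_i)$'' step. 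The paper never passes to the limit in the PDE. It introduces the segregated Nehari set $\cN_0=\{(v_i):v_i\ne 0,\ \|v_i\|^2=\int|v_i|^{2_m^*},\ v_iv_j=0\text{ a.e.}\}$, uses $\|u_{\infty,i}\|^2\le\int|u_{\infty,i}|^{2_m^*}$ to obtain Nehari rescalings $t_i\le 1$, and then the global chain
\[
c_0^\Gamma\le\tfrac{m}{N}\sum_i\|t_iu_{\infty,i}\|^2\le\tfrac{m}{N}\sum_i\|u_{\infty,i}\|^2\le\liminf_k c_k^\Gamma\le c_0^\Gamma
\]
forces $t_i=1$ and strong convergence. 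That $u_{\infty,i}$ solves \eqref{eq:dirichlet} then follows because it lies on $\cM_{\Omega_i}$ and realizes $c_{\Omega_i}^\Gamma$, and Nehari is a natural constraint.

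\medskip

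\textbf{Membership in $D_0^{m,2}(\Omega_i)$ and regularity of $u_{\infty,i}$.} You assert $u_{\infty,i}\in D_0^{m,2}(\Omega_i)^\Gamma$, but for $m\ge 2$ a function in $D^{m,2}(\rn)$ vanishing outside an open set $\Omega$ need not lie in $D_0^{m,2}(\Omega)$ without regularity of $\partial\Omega$; and you never explain why $\{u_{\infty,i}\ne 0\}$ is even a well-defined open set. The paper handles this by first proving (Proposition~\ref{prop:continuity}) that \emph{every} function in $H^m_g(\sn)^\Gamma$ has a $\cC^{m-1}$ representative away from the two singular orbits: the orbit map reduces the problem to a weighted $H^m$ norm on a one-dimensional interval, where the Sobolev embedding $H^m\hookrightarrow\cC^{m-1}$ applies. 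This a~priori regularity makes $\Theta_i:=\{u_{\infty,i}\ne 0\}$ open and, via the orbit-space picture, forces $\Omega_i$ to be a smooth $\Gamma$-tube. A separate lemma (Lemma~\ref{A}) then shows that for $\Omega$ of class $\cC^0$ one has $\{v\in D^{m,2}(\rn):v=0\text{ on }\rn\smallsetminus\Omega\}=D_0^{m,2}(\Omega)$. Without these two ingredients the definition of $\Omega_i$, their disjointness, and the key inequality $c^\Gamma_{\Omega_i}\le\tfrac{m}{N}\|u_{\infty,i}\|^2$ are all unjustified. Your treatment of $(c)$ via the one-dimensional orbit space and unique continuation is essentially the same as the paper's (Lemma~\ref{lem:tori}).
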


Combining Theorems \ref{thm:existence} and \ref{thm:main} we obtain the following result.

\begin{theorem} \label{teo:partition}
For every group $\Gamma:=O(n_1)\times O(n_2)$ with $n_1,n_2\geq 2$ and $n_1+n_2=N+1$, there exists a $(\Gamma,\ell)$-optimal partition for $\mathbb R^N$ having the properties stated in $(c)$ above.
\end{theorem}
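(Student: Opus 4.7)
The proof is an immediate concatenation of Theorems \ref{thm:existence} and \ref{thm:main}; the substantive work has been done in them, and what remains is to verify that their hypotheses can be matched. The plan is as follows. Fix $\mu_i:=1$ for $i=1,\ldots,\ell$ and choose any sequence of coupling constants $\lambda_{ij,k}<0$ with $\lambda_{ij,k}=\lambda_{ji,k}$ and $\lambda_{ij,k}\to -\infty$ as $k\to\infty$ (for instance, $\lambda_{ij,k}:=-k$). By Theorem \ref{thm:existence}, applied at each $k$, the system \eqref{eq:system} with parameters $\lambda_{ij,k}$ and $\mu_i=1$ admits a least energy fully nontrivial $\Gamma$-invariant solution $(u_{k,1},\ldots,u_{k,\ell})$.

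Next, I would apply Theorem \ref{thm:main} to this sequence. After passing to a subsequence, part $(a)$ yields strong $D^{m,2}(\rn)$-limits $u_{\infty,i}$, each nontrivial and of class $\cC^{m-1}(\rn)$; one then sets
\[
\Omega_i:=\operatorname{int}\overline{\{x\in\rn:u_{\infty,i}(x)\neq 0\}},\qquad i=1,\ldots,\ell.
\]
Part $(b)$ asserts that $\{\Omega_1,\ldots,\Omega_\ell\}\in\cP_\ell^\Gamma$ is a $(\Gamma,\ell)$-optimal partition for $\rn$, and part $(c)$ provides exactly the topological description required in the statement, namely the diffeomorphism types of each $\Omega_i$, the connectedness and smoothness, the fact that $\overline{\Omega_1\cup\cdots\cup\Omega_\ell}=\rn$, and the interface structure $\overline\Omega_i\cap\overline\Omega_{i+1}\cong\mathbb{S}^{n_1-1}\times\mathbb{S}^{n_2-1}$.

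Consequently, the partition $\{\Omega_1,\ldots,\Omega_\ell\}$ produced in this way satisfies all the conclusions of Theorem \ref{teo:partition}. No additional argument is required; since the statement only asserts the existence of \emph{some} optimal partition with the prescribed topology, the specific choice of the approximating sequence $\lambda_{ij,k}$ and the concrete value $\mu_i=1$ are harmless. The only real obstacle lies not in this deduction but in the two input theorems themselves: in particular, in the variational construction of $\Gamma$-invariant least energy solutions at the critical Sobolev exponent in Theorem \ref{thm:existence}, and in the uniform regularity and strong convergence as $\lambda_{ij,k}\to -\infty$ needed to identify the limiting nodal partition with an optimal one in Theorem \ref{thm:main}.
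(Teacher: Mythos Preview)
Your proposal is correct and matches the paper's own argument exactly: the paper simply states that Theorem~\ref{teo:partition} is obtained by combining Theorems~\ref{thm:existence} and~\ref{thm:main}, without giving any further details. Your write-up spells out the obvious concatenation (fix $\mu_i=1$, pick $\lambda_{ij,k}\to-\infty$, invoke Theorem~\ref{thm:existence} for each $k$, then apply Theorem~\ref{thm:main}) and is, if anything, more explicit than the paper itself.
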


Theorem \ref{thm:existence} extends the multiplicity result in \cite{BaScWe} for a single polyharmonic equation to systems, and generalizes the results for systems involving the Laplacian in \cite{cp} and \cite[Theorem 1.2]{cs} to the higher-order case. In all of these results the symmetries play a crucial role in compensating the lack of compactness inherent to critical problems. This fact was first used by W.Y. Ding in \cite{d}. For bounded domains with Dirichlet boundary conditions, critical polyharmonic systems with linear and subcritical coupling terms have been studied in \cite{BaGu,Mo}, whereas critical couplings were considered in \cite{GoRa}. See also \cite{Ta} for some results on weakly-coupled fourth-order Schrödinger equations.

For the proof of Theorem \ref{thm:existence} we follow the variational approach introduced in \cite{cs} which carries over immediately to higher-order operators and may be used to obtain existence and multiplicity results for other polyharmonic systems as well.

The connection between optimal partitions and competitive systems for the Laplacian was first noticed in \cite{ctv} and has been further developed in various papers considering different types of nonlinearities, couplings, and general smooth domains. Optimal partitions and shape optimization problems in general are difficult to study in the higher-order regime. The available results for the Laplacian involve the use of advanced tools such as Almgren-type monotonicity formulae, boundary Harnack principles, and Liouville theorems. The extention of all this machinery to the higher-order case seems out of reach. For general statements and a review of previously known results for the Laplacian we refer to \cite{sttz}. 

In Theorem \ref{thm:main} we make strong use of the symmetries to obtain and fully describe the shape of the optimal partition. This result extends the main theorem in \cite{css}. As far as we know, it is the first result to exhibit and fully characterize an optimal partition for a higher-order elliptic operator. 

The conformal invariance of the system \eqref{eq:system} allows translating it into the polyharmonic system on the standard sphere,
\begin{equation} \label{eq:system_sn}
\begin{cases}
\sP^m_g v_i = \mu_i |v_i|^{2^*_m-2}v_i + \sum_{i\neq j}\lambda_{ij}\beta_{ij}\vert v_j\vert^{\alpha_{ij}}\vert v_i\vert^{\beta_{ij}}v_i,\\
v_i\in H^m_g(\S^N),\\
v_i\text{ is }\Gamma\text{-invariant},\qquad i,j=1,\ldots,\ell,
\end{cases}
\end{equation}
with the same $\mu_i$, $\lambda_{ij},\alpha_{ij},\beta_{ij}$, where $\sP^m_g$ is a conformally invariant operator generalizing the conformal Laplacian for $m=1$ and the Paneitz operator for $m=2$, see \eqref{Eq:JGM-Operator}. More precisely, $\bar v=(v_1,\ldots,v_\ell)$ is a solution of \eqref{eq:system_sn} iff $\bar u=(\iota(v_1),\ldots,\iota(v_\ell))$ solves \eqref{eq:system} where $\iota$ is defined in terms of the stereographic projection, see Proposition \ref{prop:equivalent_spaces1}. Theorems \ref{thm:existence}, \ref{thm:main} and \ref{teo:partition} translate into similar statements for the system \eqref{eq:system_sn}, which is interesting in itself.

The orbit space of the action of $\Gamma$ on $\sn$ is one-dimensional, see \eqref{eq:orbit_map}. This allows to translate the systems \eqref{eq:system_sn} and \eqref{eq:system} into a system of ODE's. However, the operator has a rather complicated expression and it is degenerate, in the sense that it involves sign-changing weights that can vanish at different points. 

For $m=1$, the ODE approach was exploited in \cite{fp} to derive the existence of sign-changing solutions to the Yamabe equation on the sphere having precisely $\ell$ nodal domains for any $\ell\geq 2$, using a double-shooting method. This result does not extend easily to $m\geq 2$, because the corresponding ODE has a rather complicated expression, see Remark \ref{b:rmk} below.  On the other hand, in \cite{css}, sign-changing solutions to the Yamabe problem with a prescribed number of nodal domains were constructed using an alternating sum of limiting profiles of \emph{positive} least energy solutions to \eqref{eq:system} with $m=1$. This method also fails when considering $m\geq 2$, since it is not known if the least energy solutions of \eqref{eq:system} are signed or sign-changing.  Therefore, the existence of sign-changing solutions to the problem
\begin{equation} \label{eq:problem}
\sP^m_g v = |v|^{2^*_m-2}v,\qquad v\in H^m_g(\S^N),
\end{equation}
with precisely $\ell$ nodal domains for any $\ell\geq 2$ remains an open question.

Problem \eqref{eq:problem} arises naturally in conformal geometry when seeking for prescribed higher-order conformal invariants, called $Q$-curvatures, generalizing the scalar curvature \cite{GGS,FeGr,Ro}. For $m=1$ it is the Yamabe problem  and it is the Paneitz problem for $m=2$ \cite{DjHeLe,LePa}.

This paper is organized as follows. In Section \ref{Section:regularity} we use the symmetries to restore compactness and to derive regularity properties of the $\Gamma$-invariant functions. Next, in Section \ref{sec:system}, we describe the variational setting for the polyharmonic system and prove Theorem \ref{thm:existence}. Finally, in Section \ref{sec:segregation} we study the behavior of the least energy solutions to the system as $\lambda_{ij}\to-\infty$ and prove Theorem \ref{thm:main}. To simplify our presentation, two technical results are added in an Appendix.

\section{Compactness and regularity by symmetry}\label{Section:regularity}

Let $(\S^N,g)$ denote the unit sphere with its round metric. For $m\in\N$ and $N>2m$, the Sobolev space $H^m_g(\S^N)$ is the completion of $\cC^\infty(\S^N)$ with respect to the norm defined by the interior product
\begin{equation}\label{Eq:Standard Norm}
\langle u, v\rangle_{H_g^m(\sn)}:=
\begin{cases}
\int_{\S^N}( uv + \Delta_g^{m/2}u\cdot\Delta_g^{m/2}v) \;dV_g, & m \text{ even},\\
\int_{\S^N}( uv + \langle\nabla_g\Delta_g^{(m-1)/2}u,\nabla\Delta_g^{(m-1)/2}v\rangle_g) \;dV_g,  & m \text{ odd},
\end{cases}
\end{equation}
where $\nabla_g$ is the gradient and $\Delta_g$ is the Laplace-Beltrami operator on $\sn$. Consider the elliptic operator of order $2m$ on $\sn$ given by
\begin{equation}\label{Eq:JGM-Operator}
\sP^m_g:=\prod_{k=1}^m\left( -\Delta_g + c_k  \right),\qquad c_k:=\frac{(N-2k)(N+2k-2)}{4}.
\end{equation}
This is a conformal operator. For $m=1$ it is the conformal Laplacian and for $m=2$ it is the Paneitz operator. It yields an inner product 
\begin{align}\label{norm}
\langle u,v\rangle_\ast = \int_{\S^N}u\sP^m_gv\; \;dV_g,\quad u,v\in\mathcal{C}^\infty(\S^N),
\end{align}
and the induced norm $\Vert \cdot \Vert_\ast$ is equivalent to the standard norm given by \eqref{Eq:Standard Norm}, see \cite{BaScWe,Ro}.

The stereographic projection $\sigma:\S^N\smallsetminus\{p_0\}\rightarrow\rn$ from the north pole $p_0$ is a conformal diffeomorphism and the coordinates of the standard metric $g$ in the chart given by $\sigma^{-1}$ are
\[
g_{ij} =\psi^{4/(N-2m)}\delta_{ij},
\]
where $\delta_{ij}$ is the Kronecker delta and $\psi\in D^{m,2}(\R^N)$ is
\[
\psi(x) := \left[  \frac{2}{ 1 + |x|^2 } \right]^{\frac{N-2m}{2}}.
\]
As the operators $\sP^m_{g}$ and $(-\Delta)^m$ are conformally invariant, the stereographic projection yields the relation
\begin{equation}\label{eq:equivalent_operators1}
\sP^m_{g}(u) = \psi^{1-2_m^\ast}(-\Delta)^m [\iota(u)],\qquad\text{where \ }\iota(u):=\psi(u\circ\sigma^{-1}),
\end{equation}
for every $u\in \mathcal{C}^\infty(\S^N)$, see \cite{Ro}. 

\begin{proposition}\label{prop:equivalent_spaces1}
The map 
$$\iota:(H_g^{m}(\S^N),\|\cdot\|_{*})\rightarrow (D^{m,2}(\R^N),\|\cdot\|),\qquad u\mapsto\iota(u):=\psi(u\circ\sigma^{-1}),$$ 
is an isometric isomorphism with inverse $\iota^{-1}v = \frac{1}{\psi\circ\sigma}\,v\circ\sigma$.
\end{proposition}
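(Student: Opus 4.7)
The plan is to verify isometric preservation of the inner product on the dense subspace $\cC^\infty(\S^N)$ and then extend by completion. Throughout, I would use the bilinear form $\langle u,v\rangle_*=\int_{\S^N}u\,\sP^m_g v\,dV_g$ on one side (equivalent to the standard norm by the cited references), and on the other side exploit that
\[
\langle v,w\rangle=\int_{\R^N}w\,(-\Delta)^m v\,dx\qquad\text{for all }v,w\in\cC_c^\infty(\R^N),
\]
via integration by parts from \eqref{eq:scalar_product} (handling the cases $m$ even/odd in one stroke).

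First I would check that the map $J:v\mapsto\frac{1}{\psi\circ\sigma}\,v\circ\sigma$ is a two-sided inverse to $\iota$ on the level of functions: $\iota(Jv)=\psi\cdot\frac{v}{\psi}=v$ and $J(\iota u)=\frac{\psi(u\circ\sigma^{-1})}{\psi}\circ\sigma=u$, which is a direct substitution. Second, and this is the main computational step, I would establish the isometry identity $\|\iota(u)\|^{2}=\|u\|_*^{2}$ for $u\in\cC^\infty(\S^N)$. Using the conformal coordinates $g_{ij}=\psi^{4/(N-2m)}\delta_{ij}$ in the chart $\sigma^{-1}$, the volume element satisfies $dV_g=\psi^{2N/(N-2m)}\,dx=\psi^{2_m^*}\,dx$. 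Pushing the integral to $\R^N$ and applying the conformal covariance \eqref{eq:equivalent_operators1} gives
\begin{align*}
\|u\|_*^{2}&=\int_{\S^N} u\,\sP^m_g u\,dV_g =\int_{\R^N}(u\circ\sigma^{-1})\bigl(\psi^{1-2_m^*}(-\Delta)^m[\iota(u)]\bigr)\psi^{2_m^*}\,dx\\
&=\int_{\R^N}\psi(u\circ\sigma^{-1})\,(-\Delta)^m[\iota(u)]\,dx=\int_{\R^N}\iota(u)\,(-\Delta)^m[\iota(u)]\,dx=\|\iota(u)\|^{2},
\end{align*}
the last equality being the integration-by-parts identification of \eqref{eq:scalar_product}.

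Third, to promote this into an isometric \emph{isomorphism} between the completions, I need density on both sides. On the sphere, $\cC^\infty(\S^N)$ is dense in $H_g^m(\S^N)$ by the very definition of the latter. On the Euclidean side, for any $v\in\cC_c^\infty(\R^N)$, the function $Jv=\frac{v\circ\sigma}{\psi\circ\sigma}$ is smooth on $\S^N\setminus\{p_0\}$; since $v$ has compact support in $\R^N$ and $\sigma$ is a diffeomorphism onto $\R^N$, the support of $Jv$ is a compact subset of $\S^N\setminus\{p_0\}$, so extending by zero yields a function in $\cC^\infty(\S^N)$. Hence $\cC_c^\infty(\R^N)\subset\iota(\cC^\infty(\S^N))$, which in particular gives that the image of $\iota$ restricted to $\cC^\infty(\S^N)$ is dense in $D^{m,2}(\R^N)$. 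Linearity is immediate, so $\iota$ restricted to the dense subspace $\cC^\infty(\S^N)$ is a linear isometry with dense image in $D^{m,2}(\R^N)$; by a standard extension-by-continuity argument it extends uniquely to an isometric isomorphism between the two Hilbert spaces, whose inverse on smooth test functions agrees with $J$ and therefore coincides with $J$ everywhere by density.

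The main obstacle I anticipate is carefully justifying the change of variables together with the integration by parts for the polyharmonic operator: one must make sure the boundary terms vanish (no issue for $u\in\cC^\infty(\S^N)$ since $\iota(u)$ has the right decay at infinity coming from the factor $\psi$, which behaves like $|x|^{-(N-2m)}$), and that the ``even vs.\ odd $m$'' formulation of the inner product in \eqref{eq:scalar_product} both reduce to $\int\iota(u)(-\Delta)^m\iota(u)\,dx$. Once this identity is in place, everything else reduces to abstract density and extension arguments.
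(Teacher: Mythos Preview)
Your proposal is correct and follows essentially the same approach as the paper: both use the conformal covariance \eqref{eq:equivalent_operators1} and $dV_g=\psi^{2_m^*}\,dx$ to compute $\langle u_1,u_2\rangle_*=\langle\iota(u_1),\iota(u_2)\rangle$ on smooth functions, then extend by density. Your write-up is more detailed than the paper's (explicit inverse check, density of the image, attention to the integration-by-parts justification), but the underlying argument is the same.
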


\begin{proof} 
As $dV_{g}=\psi^{2_m^\ast}\;dx$, we derive from \eqref{eq:equivalent_operators1} that
			\[
		\langle u_1,u_2 \rangle_{\ast}=\int_{\S^N} u_1\sP_{g} u_2\; dV_{g} = \int_{\R^N}\iota(u_1)(-\Delta)^m[\iota(u_2)] \;dx=\langle \iota(u_1),\iota(u_2) \rangle
		\]
for any $u_1,u_2\in\cC^\infty(\sn)$. The proposition now follows by density.
\end{proof}

Set $\Gamma:=O(n_1)\times O(n_2)$, where $n_1,n_2\in\n$ with $n_1,n_2\geq 2$ and $n_1+n_2=N+1$. Then $\Gamma$ acts by linear isometries on the Sobolev spaces $H^{m}_g(\sn)$ and $D^{m,2}(\rn)$ as follows.

\begin{proposition}\label{prop:sobolev_action}
For every $\gamma\in O(N+1)$,
$$\gamma:(H^{m}_g(\sn),\|\cdot\|_*)\to (H_g^m(\sn),\|\cdot\|_*),\qquad\gamma u:=u\circ\gamma^{-1},$$
and
$$\gamma:D^{m,2}(\rn)\to D^{m,2}(\rn),\qquad \gamma v:=|\det \tilde{\gamma}'|^{1/2_m^\ast} v\circ\tilde{\gamma},$$
with $\tilde\gamma:=\sigma\circ\gamma^{-1}\circ\sigma^{-1}$, are linear isometries.
\end{proposition}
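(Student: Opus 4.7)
The plan is to handle the spherical action directly from the isometry property of $\gamma$ on $(\S^N,g)$, and then transfer the conclusion to the Euclidean side via the isomorphism $\iota$ of Proposition \ref{prop:equivalent_spaces1}. Linearity of both maps is immediate from their definitions, so only norm preservation requires work.

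For the action on $H^m_g(\S^N)$, I would observe that since $\gamma\in O(N+1)$ restricts to an isometry of $(\S^N,g)$, the Laplace--Beltrami operator commutes with pullback: $\Delta_g(u\circ\gamma^{-1})=(\Delta_g u)\circ\gamma^{-1}$. Iterating this in the factored expression $\sP^m_g=\prod_{k=1}^m(-\Delta_g+c_k)$ gives $\sP^m_g(\gamma u)=\gamma(\sP^m_g u)$. Combined with the $\gamma$-invariance of $dV_g$, a change of variables yields $\langle\gamma u,\gamma u\rangle_\ast=\langle u,u\rangle_\ast$ on $\cC^\infty(\S^N)$, and the conclusion on $H^m_g(\S^N)$ follows by density.

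For the action on $D^{m,2}(\R^N)$, I would verify that the diagram commutes, i.e., $\iota\circ\gamma=\gamma\circ\iota$. Unwinding the definitions one has
\[
\iota(\gamma u)=\psi\cdot(u\circ\gamma^{-1}\circ\sigma^{-1}),\qquad \gamma(\iota u)=|\det\tilde\gamma'|^{1/2_m^\ast}(\psi\circ\tilde\gamma)\cdot(u\circ\gamma^{-1}\circ\sigma^{-1}),
\]
so this commutativity reduces to the pointwise identity
\[
|\det\tilde\gamma'(x)|^{1/2_m^\ast}\,\psi(\tilde\gamma x)=\psi(x)\qquad\text{on }\R^N.
\]
Once the identity is established, combining commutativity with the isometry property of $\iota$ and the spherical case just proved gives $\|\gamma v\|=\|\iota^{-1}(\gamma v)\|_\ast=\|\gamma(\iota^{-1}v)\|_\ast=\|\iota^{-1}v\|_\ast=\|v\|$, which is exactly what is needed.

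The conformal bookkeeping behind the displayed identity is the one technical point and the step I expect to require the most care. I would derive it from $\tilde\gamma^\ast[(\sigma^{-1})^\ast g]=(\sigma^{-1})^\ast g$, which holds because $\gamma$ is a $g$-isometry. Since $(\sigma^{-1})^\ast g=\psi^{4/(N-2m)}\delta$ is a pointwise conformal multiple of $\delta$ and $\tilde\gamma$ preserves it, $\tilde\gamma$ is conformal with respect to $\delta$, whence $(\tilde\gamma')^T\tilde\gamma'=|\det\tilde\gamma'|^{2/N}I$; expanding both sides of the invariance equation then gives
\[
(\psi\circ\tilde\gamma)^{4/(N-2m)}\,|\det\tilde\gamma'|^{2/N}=\psi^{4/(N-2m)},
\]
and the identity follows on using $1/2_m^\ast=(N-2m)/(2N)$. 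With this in hand everything else is routine change of variables and density.
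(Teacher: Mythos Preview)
Your proof is correct and follows essentially the same route as the paper: establish the spherical isometry directly from $\sP^m_g(u\circ\gamma^{-1})=(\sP^m_g u)\circ\gamma^{-1}$ plus change of variables, then transfer to $D^{m,2}(\R^N)$ via the isometry $\iota$. The only differences are cosmetic: the paper invokes naturality of the GJMS operator under isometries (citing \cite{BaJu,Ro}) rather than iterating the commutation of $\Delta_g$ through the factored form, and it quotes the Jacobian identity $\psi/(\psi\circ\tilde\gamma)=|\det\tilde\gamma'|^{1/2_m^\ast}$ from \cite{cp} whereas you derive it from $\tilde\gamma^\ast[(\sigma^{-1})^\ast g]=(\sigma^{-1})^\ast g$; your derivation is self-contained and correct.
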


\begin{proof}
	The operator $\sP_g^m$ is natural in the sense that it is invariant under changes of coordinates \cite{FeGr,Ro}. This implies, in particular, that $\gamma^\ast \sP^m_{g}=\sP^m_{g}\circ\gamma^\ast$ for every isometry $\gamma:\S^N\rightarrow \S^N$, where $\gamma^\ast$ denotes the pullback of tensors, see \cite{BaJu,Ro}. Therefore, if $u\in\cC^\infty(\sn)$ and $\gamma\in O(N+1),$
	\[
	\sP^m_g(u\circ\gamma) = (\sP^m_g\circ\gamma^\ast)(u)=(\gamma^\ast\circ \sP^m_g)(u)=\sP^m_g(u)\circ\gamma.
	\]
Then, for $u\in\mathcal{C}^\infty(\sn)$,
	\[
	\|\gamma u\|_{*}^2 =\int_{\sn}(u\circ\gamma^{-1})\sP^m_g(u\circ\gamma^{-1}) \;dV_g =\int_{\sn}(u\circ\gamma^{-1})\sP^m_g(u)\circ\gamma^{-1} \;dV_g = \int_{\sn} u\sP^m_gu \;dV_g =\Vert u\Vert^2_{*}.
	\]
This shows, by density, that $\gamma:(H^{m}_g(\sn),\|\cdot\|_*)\to (H^m_g(\sn),\|\cdot\|_*)$ is a linear isometry.

By Proposition \ref{prop:equivalent_spaces1}, the composition $\iota\circ\gamma\circ\iota^{-1}:D^{m,2}(\R^N)\rightarrow D^{m,2}(\R^N)$ is a linear isometry for every $\gamma\in\Gamma$. So $\gamma v := (\iota\circ\gamma\circ\iota^{-1})v$ defines a linear action of $\Gamma$ on $D^{m,2}(\R^N)$. Setting $\tilde\gamma:=\sigma\circ\gamma^{-1}\circ\sigma^{-1}$, we have that
\[
\gamma v = \frac{\psi}{\psi\circ\widetilde{\gamma}} v\circ\widetilde{\gamma} 
= \vert \det \widetilde{\gamma}'\vert^{1/2_m^\ast} v\circ\widetilde{\gamma}
\]
for any $\gamma\in \Gamma$ and any $v\in D^{m,2}(\R^N)$, see identity (3.2) in \cite{cp}.
\end{proof}

Define
\begin{align*}
H^{m}_g(\sn)^\Gamma:=&\{u\in H^{m}_g(\sn):\gamma u=u\text{ for all }\;\gamma\in\Gamma\}, \\
D^{m,2}(\rn)^\Gamma:=&\{v\in D^{m,2}(\rn):\gamma v=v \text{ for all }\;\gamma\in\Gamma\}.
\end{align*}
Note that the map $\iota$ from Proposition \ref{prop:equivalent_spaces1} yields an isometric isomorphism
\begin{equation} \label{eq:iota}
\iota:H^{m}_g(\sn)^\Gamma\to D^{m,2}(\rn)^\Gamma.
\end{equation}
Let $ L_g^{2^*_m}(\S^N)$ and $L^{2^*_m}(\R^N)$ denote the usual Lebesgue spaces. The crucial role played by the symmetries is given by the following statement. 

\begin{lemma}\label{Lemma:Sobolev}
	The embeddings
	\[
	H^m_g(\S^N)^\Gamma\hookrightarrow L_g^{2^*_m}(\S^N)\qquad\text{and}\qquad D^{m,2}(\R^N)^\Gamma\hookrightarrow L^{2^*_m}(\R^N)
	\]
	are continuous and compact.
\end{lemma}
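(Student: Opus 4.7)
The plan is to reduce the two claims to one using the isometric isomorphism $\iota$, and then to prove compactness on the compact manifold $\sn$ via a concentration-compactness argument that exploits the fact that $\Gamma$ acts on $\sn$ with only positive-dimensional orbits.

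First I would use Proposition~\ref{prop:equivalent_spaces1} together with the identities $\iota(u)=\psi(u\circ\sigma^{-1})$ and $dV_g=\psi^{2_m^*}dx$, which give
\[
\irn|\iota(u)|^{2_m^*}\,dx=\irn\psi^{2_m^*}|u\circ\sigma^{-1}|^{2_m^*}\,dx=\int_{\sn}|u|^{2_m^*}\,dV_g.
\]
Hence $\iota$ also restricts to a linear isometry $L^{2_m^*}_g(\sn)\to L^{2_m^*}(\rn)$, and in combination with \eqref{eq:iota} the two embeddings in the statement correspond to each other under $\iota$. It therefore suffices to prove the claim on $\sn$. Continuity is then immediate from the standard critical Sobolev embedding $H^m_g(\sn)\hookrightarrow L^{2_m^*}_g(\sn)$.

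For compactness, take a bounded sequence $(u_k)\subset H^m_g(\sn)^\Gamma$. By reflexivity and the Rellich--Kondrachov theorem, passing to a subsequence I may assume $u_k\rightharpoonup u$ weakly in $H^m_g(\sn)$, $u_k\to u$ a.e., and $u_k\to u$ strongly in $L^p_g(\sn)$ for every $p<2_m^*$. Applying the higher-order form of P.-L.~Lions' concentration-compactness principle valid for bounded sequences in $H^m_g(\sn)$ (cf.\ \cite{BaScWe}), up to a further subsequence there exist an at most countable set $J$, points $\{x_j\}_{j\in J}\subset\sn$ and weights $\nu_j>0$ such that
\[
|u_k|^{2_m^*}\,dV_g \ \rightharpoonup\ |u|^{2_m^*}\,dV_g+\sum_{j\in J}\nu_j\,\delta_{x_j}
\]
weakly as Radon measures. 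Since each $u_k$ is $\Gamma$-invariant, so is the limit measure, and therefore so is its atomic part. With $n_1,n_2\geq 2$, every $\Gamma$-orbit on $\sn\subset\R^{n_1}\times\R^{n_2}$ has dimension at least $\min(n_1,n_2)-1\geq 1$, hence is uncountable. A $\Gamma$-invariant Radon measure supported on the countable set $\{x_j\}_{j\in J}$ must therefore vanish, forcing $J=\emptyset$. This yields $\|u_k\|_{L^{2_m^*}_g}\to\|u\|_{L^{2_m^*}_g}$, and combined with the a.e.\ convergence, the Brezis--Lieb lemma delivers strong convergence $u_k\to u$ in $L^{2_m^*}_g(\sn)$, as desired.

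The step I expect to be the main obstacle is producing (or properly citing) the higher-order concentration-compactness decomposition: for $m=1$ this is Lions' classical result, but for $m\geq 2$ one must handle $\Delta_g^{m/2}u_k$ or $\nabla_g\Delta_g^{(m-1)/2}u_k$ in place of $\nabla_g u_k$. The cleanest approach is to quote a polyharmonic version from the literature rather than to re-derive the atomic structure; once that is in hand, the symmetry argument ruling out atoms is automatic.
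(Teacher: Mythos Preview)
Your proposal is correct and, in substance, coincides with the paper's approach: the paper proves the $\sn$ case by citing \cite[Lemma 3.2]{BaScWe} and then transfers to $\rn$ via the isometry $\iota$ exactly as you do. What you have written is essentially an unpacking of the argument behind that cited lemma (concentration-compactness plus the observation that all $\Gamma$-orbits are positive-dimensional, hence no atoms can occur), so there is no genuine methodological difference.
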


\begin{proof}
The statement for $\sn$ follows from \cite[Lemma 3.2]{BaScWe}. The statement for $\rn$ is obtained using the isometry \eqref{eq:iota} and noting that $\iota:L_g^{2^*_m}(\S^N)\to L^{2^*_m}(\R^N)$ is also an isometry.
\end{proof}

To study the regularity of functions belonging to $H^{m}_g(\sn)^\Gamma$ and $D^{m,2}(\rn)^\Gamma$ we turn our attention to the space of $\Gamma$-orbits of $\sn$. 

We write $\mathbb R^{N+1}\equiv\R^{n_1}\times\R^{n_2}$. Accordingly, points in $\sn$ are written as $(x,y)\in\R^{n_1}\times\R^{n_2}$. Let $q:\S^N\rightarrow[0,\pi]$ be given by
\begin{equation} \label{eq:orbit_map}
q:=\arccos\circ f,\qquad \text{where \ }f(x,y):=|x|^2-|y|^2.
\end{equation}
This is a quotient map identifying each $\Gamma$-orbit in $\S^N$ with a single point. It is called the $\Gamma$-\textit{orbit map} of $\S^N$. Note that the $\Gamma$-orbit space of $\S^N$ is one-dimensional and that
$$q^{-1}(0)\cong\mathbb{S}^{n_1-1},\qquad q^{-1}(t)\cong\mathbb{S}^{n_1-1}\times\mathbb{S}^{n_2-1}\text{ if  }t\in(0,\pi),\qquad q^{-1}(\pi)\cong\mathbb{S}^{n_2-1}.$$

Let $\phi:(0,\pi)\rightarrow\R$ be given by 
\begin{align}\label{H}
\phi(t):=\frac{2}{\sin t}[(n_1+n_2-2)\cos t - (n_2-n_1)] 
\end{align}
and define $\sL:\mathcal{C}^\infty(0,\pi)\rightarrow\mathcal{C}^\infty(0,\pi)$ by
\begin{align*}
	\sL:=4\frac{d^2}{\;dt^2} + \phi(t)\frac{d}{\;dt}.
\end{align*}
Set
\begin{align}\label{h}
h(t):=2 \vert\S^{n_1-1}\vert \vert\S^{n_2-1}\vert \cos^{n_1-1}(t/2)\sin^{n_2-1}(t/2),\quad t\in[0,\pi],
\end{align}
where $\vert\S^{n_i-1}\vert$ is the $(n_i-1)$-dimensional measure of the sphere $\S^{n_i-1}$ for $i=1,2$.  For $\bf k=(k_0,\ldots,k_m)\in (0,\infty)^{m+1}$ and $w\in\mathcal{C}^\infty(0,\pi)$ define
\begin{equation}\label{h:norm}
	\|w\|_{\bf k,h}:=\left(\sum_{\substack{i=0\\ i\ \textrm{even}}}^m \frac{k_i}{4}\int_0^\pi |\sL^{i/2} w|^2 \;h \;dt+ \sum_{\substack{i=1\\ i\ \textrm{odd}}}^m k_i \int_0^\pi |(\sL^{(i-1)/2}w)'|^2 \; h\;dt\right)^{1/2}, 
\end{equation}
where $\sL^{i}$ denotes the $i$-fold composition of $\sL$ and $(\sL^{i}w)':=\frac{d}{dt}\Big((4\frac{d^2}{\;dt^2} + \phi(t)\frac{d}{\;dt})^{i}(w)\Big)$.

Note that the operator $\sP_g^m$ can be written as
\begin{align*}
\sP_g^m=\sum_{i=0}^ma_i(-\Delta_g)^{i} 
\end{align*}
for some $a_i>0$. Given $\bf k:=(k_0,\ldots,k_m)\in(0,\infty)^{m+1}$, we consider the operator
\begin{equation*}
\sP_{\bf k,g}^m:=\sum_{i=0}^m k_i(-\Delta_g)^{i},
\end{equation*}
and the norm
\begin{equation}\label{k}
\|u\|_{\bf k,\ast}:=\left(\int_{\S^N}u\sP_{\bf k,g}^m u \ \;dV_g\right)^\frac{1}{2}\qquad \text{ for }u\in \cC^\infty(\S^N).
\end{equation}
Note that $\|\cdot\|_*=\|\cdot\|_{\bf a,*}$ with $\bf a=(a_0,\ldots,a_m)$ as above. So $\|\cdot\|_{\bf k,*}$ is equivalent to $\|\cdot\|_*$.

\begin{lemma} \label{lem:isometry2} 
For every $\bf k\in(0,\infty)^{m+1}$ and $w\in\mathcal{C}^\infty[0,\pi]$,
$$\|w\circ q\|_{\bf k,*}^2= \|w\|_{\bf k,h}^2.$$
\end{lemma}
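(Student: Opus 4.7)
The plan is to reduce this identity to a one-dimensional computation using the warped-product structure of $\sn$ adapted to the $\Gamma$-action, explicitly compute $\Delta_g$ on $\Gamma$-invariant functions, and then iterate.

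First I would introduce coordinates adapted to the orbit decomposition. Away from the singular orbits $q^{-1}(0)\cong\S^{n_1-1}$ and $q^{-1}(\pi)\cong\S^{n_2-1}$, the map
$$(t,\omega_1,\omega_2)\mapsto(\cos(t/2)\omega_1,\,\sin(t/2)\omega_2)$$
identifies $(0,\pi)\times\S^{n_1-1}\times\S^{n_2-1}$ with a dense open subset of $\sn$, and a short computation shows that the round metric reads
$$g = \tfrac{1}{4}\,dt^2 + \cos^2(t/2)\,g_{\S^{n_1-1}} + \sin^2(t/2)\,g_{\S^{n_2-1}}.$$
In particular $g^{tt}=4$ (so $|\nabla_g q|^2=4$), and integrating the volume form over the two spherical factors yields the push-forward identity
$$\int_{\sn}(F\circ q)\,dV_g = \int_0^\pi F(t)\,\frac{h(t)}{4}\,dt$$
for any integrable $F:[0,\pi]\to\r$.

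Next I would compute $\Delta_g$ on a $\Gamma$-invariant function. Writing $u=w\circ q$ and using $\Delta_g u = |g|^{-1/2}\partial_t(|g|^{1/2}g^{tt}\partial_t u)$, the coefficient of $w'$ becomes $4\,\partial_t\log\sqrt{|g|} = -2(n_1-1)\tan(t/2)+2(n_2-1)\cot(t/2)$, which is rearranged via the half-angle identities $\sin t = 2\sin(t/2)\cos(t/2)$ and $\cos t = 1-2\sin^2(t/2)$ into $\phi(t)$; thus $\Delta_g(w\circ q) = (\sL w)\circ q$. Since $\sL w$ is itself a smooth function of $t$, iteration gives $(-\Delta_g)^j(w\circ q) = ((-\sL)^j w)\circ q$ for every $j\geq 0$. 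Hence for even $i$ one has $|(-\Delta_g)^{i/2}u|^2 = |\sL^{i/2}w|^2\circ q$ pointwise, while for odd $i$ the identity $\nabla_g(f\circ q) = f'(q)\nabla_g q$ together with $|\nabla_g q|^2=4$ gives $|\nabla_g(-\Delta_g)^{(i-1)/2}u|^2 = 4\,|(\sL^{(i-1)/2}w)'|^2\circ q$.

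Finally I would assemble the norm. Since $\sn$ has no boundary, integration by parts yields
$$\int_{\sn}u(-\Delta_g)^i u\,dV_g = \begin{cases}\int_{\sn}|(-\Delta_g)^{i/2}u|^2\,dV_g & \text{if } i \text{ is even},\\ \int_{\sn}|\nabla_g(-\Delta_g)^{(i-1)/2}u|^2\,dV_g & \text{if } i \text{ is odd},\end{cases}$$
and substituting the pointwise identities together with the push-forward formula converts each term into an integral on $(0,\pi)$ against $h(t)\,dt$, producing exactly $\tfrac{k_i}{4}\int|\sL^{i/2}w|^2 h\,dt$ for even $i$ and $k_i\int|(\sL^{(i-1)/2}w)'|^2 h\,dt$ for odd $i$. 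Summing $i=0,\dots,m$ gives the asserted equality.

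The main obstacle is the explicit coordinate rearrangement in the middle step, where the trigonometric coefficient coming from $\partial_t\log\sqrt{|g|}$ must be recognized as the function $\phi(t)$ in \eqref{H}; this is purely a trig manipulation but is what connects the geometric picture on $\sn$ with the one-dimensional operator $\sL$ on $(0,\pi)$. A minor secondary concern is the mild degeneracy of the coordinates at $t=0,\pi$: since the weight $h$ vanishes at precisely the correct rate to absorb the coordinate singularity, the push-forward formula and the integration-by-parts steps go through without any boundary contribution.
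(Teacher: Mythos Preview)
Your proposal is correct and follows essentially the same route as the paper: reduce the norm to a sum of integrals of $|\Delta_g^{i/2}u|^2$ and $|\nabla_g\Delta_g^{(i-1)/2}u|_g^2$, identify $\Delta_g(w\circ q)=(\sL w)\circ q$ and $|\nabla_g q|_g^2=4$, iterate, and push forward the volume form to obtain the weight $h$. The only cosmetic difference is that the paper derives $\Delta_g q=\phi\circ q$ via the auxiliary function $f(x,y)=|x|^2-|y|^2$ (quoting \cite{fp} for the integration identity), whereas you work directly in the warped-product coordinates $(t,\omega_1,\omega_2)$ and read off both the Laplacian and the push-forward from the explicit form of the metric.
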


\begin{proof}
Set $u:=w\circ q$. For $u_1,u_2\in \cC^\infty(\S^N)$, observe that, if $i$ is even, then
\[
\int_{\S^N} u_1 (-\Delta_g)^{i}u_2\;dV_g = \int_{\S^N} \Delta_g^{i/2}u_1 \Delta_g^{i/2} u_2 \; \;dV_g,
\]
while, if  $i$ is odd,
\[
\int_{\S^N} u_1 (-\Delta_g)^{i}u_2 \;dV_g
= \int_{\S^N} \langle\nabla_g \Delta_g^{(i-1)/2}u_1,\nabla_g \Delta_g^{(i-1)/2} u_2\rangle_g \; \;dV_g.
\]
Hence, 
\begin{equation}
\begin{split}
\|u\|_{\bf k,\ast}^2=&\sum_{\substack{i=0\\ i\ \textrm{even}}}^m k_i\int_{\S^N} |\Delta^{i/2}u|^2 \; \;dV_g + \sum_{\substack{i=0\\ i\ \textrm{odd}}} k_i  \int_{\S^N} |\nabla_g \Delta_g^{(i-1)/2}u|_g^2 \; \;dV_g.
\end{split}
\end{equation}
Note that, for the function $f$ defined in \eqref{eq:orbit_map}, the sets $M_+:= f^{-1}(1)$ and $M_-:= f^{-1}(-1)$ are submanifolds of $\S^{N}$ diffeormorphic to $\S^{n_1-1}$ and $\S^{n_2-1}$ respectively. As in \cite{fp}, we have that
 \[
 \vert \nabla_g f\vert_g^2 = 4(1 - f^2)\quad\text{ and } \quad\Delta_g f= -2(N+1)f + 2(n_2-n_1).
 \]
Then, by the definition of $q$,
\[
|\nabla_g q|_g^2 = 4\qquad\text{and}\qquad \Delta_g q=\phi\circ q,
\]
so
\[
\Delta_{g}u=\Delta_g(w\circ q)=(w''\circ q) |\nabla_g q|_g^2+(w'\circ q)\Delta_gq=(\sL w)\circ q, \quad \text{ in }\S^N\smallsetminus M_+\cup M_-
\]
and, for each $i\in\N\cup\{0\}$,
\[
\Delta_g^i u = (\sL^i w)\circ q \quad \text{and}\quad |\nabla_g \Delta_g^i u|_g^2 = 4|(\sL^i w_1)'|^2\circ q, \quad \text{ in }\S^N\smallsetminus M_+\cup M_-. 
\]
By \cite[Lemma 2.2]{fp},
\begin{equation*}
\int_{\S^N}|\Delta_g^{i}u|^2 \; \;dV_g 
= \frac{1}{4}\int_0^\pi |\sL^i w|^2 \; h\;dt\label{Eq:Norm even}\quad \text{and}\quad
\int_{\S^N}|\nabla_g\Delta_g^{i}u|_g^2 \; \;dV_g  = \int_0^\pi |(\sL^{i}w)'|^2 \;h \;dt.
\end{equation*} 
Therefore,
\begin{equation*}
	\|u\|^2_{\bf k,*}=\sum_{\substack{i=0\\ i\ \textrm{even}}}^m \frac{k_i}{4}\int_0^\pi |\sL^{i/2} w|^2 \;h \;dt+ \sum_{\substack{i=1\\ i\ \textrm{odd}}}^m k_i \int_0^\pi |(\sL^{(i-1)/2}w)'|^2 \;h \;dt=\|w\|^2_{\bf k,h},
\end{equation*}
as claimed.
\end{proof}

For $\varepsilon>0$, let $H^{m}(\varepsilon,\pi-\varepsilon)$ denote the usual Sobolev space of order $m$ in the interval $(\varepsilon,\pi-\varepsilon)$.

\begin{lemma}\label{p}
For each $\varepsilon>0$, there are $\bf k=(k_0,\ldots, k_m)\in(0,\infty)^{m+1}$ and $A>0$, depending on $\eps$, such that
\[
\Vert w\Vert_{\bf k,h}\geq A \Vert w\Vert_{H^{m}(\varepsilon,\pi-\varepsilon)}\qquad \text{for every \  }w\in\mathcal{C}^\infty[0,\pi].
\]
\end{lemma}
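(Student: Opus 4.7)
The plan is to reduce the inequality to a purely local statement on $[\varepsilon,\pi-\varepsilon]$, exploiting that the weight $h$ is bounded below by a positive constant on that interval, and that $\sL = 4\partial_t^2+\phi(t)\partial_t$ is a non-degenerate second-order operator with smooth coefficients away from $\{0,\pi\}$. Since the lemma only asks for the existence of some $\mathbf{k}$, I will set $\mathbf{k}=(1,\ldots,1)$ and adjust $A$ at the end.

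First I would unfold the weighted norm. Writing $T_{2j}w:=\sL^{j}w$ and $T_{2j+1}w:=(\sL^{j}w)'$, a short induction on $j$ using the definition of $\sL$ shows that for every $i\in\{0,\ldots,m\}$,
\[
T_i w \;=\; 4^{\lfloor i/2\rfloor}\,w^{(i)}+\sum_{l=0}^{i-1}\psi_{i,l}(t)\,w^{(l)},
\]
with coefficients $\psi_{i,l}$ which are polynomials in $\phi,\phi',\phi'',\ldots$. Since $\sin t\geq\sin\varepsilon>0$ on $[\varepsilon,\pi-\varepsilon]$, the function $\phi$ (together with all its derivatives) is bounded there, so each $\psi_{i,l}$ is bounded on that interval and $h_\varepsilon:=\min_{[\varepsilon,\pi-\varepsilon]}h>0$.

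Second, I would solve for $w^{(i)}$ in the identity above to obtain the pointwise estimate
\[
|w^{(i)}(t)|^{2}\;\leq\;C_i(\varepsilon)\Bigl(|T_i w(t)|^{2}+\sum_{l=0}^{i-1}|w^{(l)}(t)|^{2}\Bigr),\qquad t\in[\varepsilon,\pi-\varepsilon].
\]
Integrating over $[\varepsilon,\pi-\varepsilon]$ and iterating inductively in $i$ (the base case $i=0$ being trivial since $T_0w=w$), this gives
\[
\|w\|_{H^{m}(\varepsilon,\pi-\varepsilon)}^{2}\;\leq\;C(\varepsilon)\sum_{i=0}^{m}\int_{\varepsilon}^{\pi-\varepsilon}|T_i w|^{2}\,dt\;\leq\;\frac{C(\varepsilon)}{h_\varepsilon}\sum_{i=0}^{m}\int_{0}^{\pi}|T_i w|^{2}\,h\,dt\;\leq\;\frac{4C(\varepsilon)}{h_\varepsilon}\|w\|_{\mathbf{k},h}^{2},
\]
so the conclusion follows by taking $A:=\bigl(h_\varepsilon/(4C(\varepsilon))\bigr)^{1/2}$.

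I expect the main (minor) obstacle to be keeping track of the induction cleanly so that the constants $C_i(\varepsilon)$ remain finite. This relies on the coefficients $\psi_{i,l}$ staying bounded on $[\varepsilon,\pi-\varepsilon]$, which is precisely why the restriction away from the endpoints $0$ and $\pi$ is essential: these are the points where $\phi$ becomes singular through the $\sin t$ in its denominator, and without moving away from them the reduction to the Euclidean Sobolev norm would fail.
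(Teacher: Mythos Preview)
Your argument is correct and, in fact, slightly more efficient than the paper's. Both proofs rest on the same structural observation, namely that on $[\varepsilon,\pi-\varepsilon]$ the weight $h$ is bounded below and each $T_iw$ equals $4^{\lfloor i/2\rfloor}w^{(i)}$ plus lower-order derivatives with bounded coefficients. The difference lies in how this triangular relation is exploited. The paper first derives the pointwise lower bound $\tfrac{1}{4}|\sL^{i/2}w|^2h\geq\eta\bigl(|w^{(i)}|^2-\mu\sum_{j<i}|w^{(j)}|^2\bigr)$ (their Lemma~A.2) and then chooses $k_i=(2\mu)^{-i}$ so that, upon summing, the negative lower-order contributions are absorbed by the leading terms; thus their $\mathbf{k}$ depends on $\varepsilon$ through $\mu$. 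You instead invert the triangular relation, bound $|w^{(i)}|^2$ above by $|T_iw|^2$ plus lower-order terms, and clear the latter by a straightforward induction; this lets you take $\mathbf{k}=(1,\ldots,1)$ independently of $\varepsilon$, pushing all the $\varepsilon$-dependence into $A$. Your route avoids the somewhat delicate bookkeeping with the geometric weights $k_i=(2\mu)^{-i}$ and makes it transparent that any fixed $\mathbf{k}\in(0,\infty)^{m+1}$ would do; the paper's route, on the other hand, gives a more explicit final constant. For the application to Proposition~\ref{prop:continuity} the two are equivalent, since one immediately passes to the equivalent norm $\|\cdot\|_*$ anyway.
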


\begin{proof}
By Lemma \ref{bb:lemma}, for every $\varepsilon>0$ there are $\eta>0$ and $\mu>1$, depending on $\eps$, such that, for $i\geq 2$ even
\begin{align}\label{i1}
\frac{1}{4}\,|\sL^{i/2} w|^2 h&\geq \eta\left(|w^{(i)}|^2 - \mu\sum_{j=1}^{i - 1}|w^{(j)}|^2\right)\quad \text{ in }(\varepsilon,\pi-\varepsilon),
\end{align}
and for $i$ odd
\begin{align}\label{i2}
|(\sL^{(i-1)/2} w)'|^2 h &\geq \eta
\left(|w^{(i)}|^2 -\mu\sum_{j=1}^{i-1}|w^{(j)}|^2\right)\quad \text{ in }(\varepsilon,\pi-\varepsilon).
\end{align}
Let $k_0:=1$, $k_i:=(2\mu)^{-i}$ for $i\geq 1$, and $\bf k:=(k_0,\ldots, k_m)\in(0,\infty)^{m+1}$. By \eqref{i1}, \eqref{i2},
\begin{align*}
 &\Vert w\Vert^2_{\bf k,h}
 =\sum_{\substack{i=0\\ i\ \textrm{even}}}^m \frac{k_i}{4}\int_0^\pi |\sL^{i/2} w|^2 h\; \;dt
 + \sum_{\substack{i=1\\ i\ \textrm{odd}}}^m k_i \int_0^\pi |\sL^{(i-1)/2}w'|^2 h\; \;dt\\
 &\geq \eta\int_\varepsilon^{\pi-\varepsilon}
 \Big[\Big(\sum_{i=0}^m k_i|w^{(i)}|^2\Big)
 -\mu\Big(\sum_{i=0}^m k_i\sum_{j=1}^{i - 1}|w^{(j)}|^2\Big)\Big]\; \;dt\\
  &= \eta\int_\varepsilon^{\pi-\varepsilon}\Big[
  k_0|w|^2 
  +\Big(k_1- \mu \sum_{i=2}^mk_i\Big)|w^{(1)}|^2 
  +\sum_{i=2}^{m-1}\Big(k_i- \mu \sum_{j=i+1}^m k_{i}\Big)|w^{(i)}|^2 + k_m \vert w^{(m)}\vert^2\Big] \;dt\\
  &= \eta\int_\varepsilon^{\pi-\varepsilon}\Big[
  |w|^2 
  +\Big(\frac{1}{2\mu}-\sum_{i=2}^m \frac{1}{2^i\mu^{i-1}}\Big)|w^{(1)}|^2
  +\sum_{i=2}^{m-1}\Big(\frac{1}{2^i\mu^i}-\sum_{j=i+1}^m \frac{1}{2^j\mu^{j-1}}\Big)|w^{(i)}|^2 +(2\mu)^{-m} \vert w^{(m)}\vert^2\Big] \;dt.
 \end{align*}
 For $i=1,\ldots,m-1$,
 \begin{align*}
  A_i:=\frac{1}{2^i\mu^i}-\sum_{j=i+1}^m \frac{1}{2^j\mu^{j-1}}=\frac{2^{-i} (\mu -1) \mu ^{-i}+2^{-m} \mu ^{1-m}}{2 \mu -1}>0,
 \end{align*}
and the claim follows with $A:=\eta\min\{A_1,\ldots,A_{m-1},(2\mu)^{-m}\}>0$.
\end{proof}

We have the following regularity result.

\begin{proposition} \label{prop:continuity}
Let $Z:=(\S^{n_1-1}\times\{0\})\, \cup\, (\{0\}\times\S^{n_2-1})\subset\S^N$. For every $u\in H^m_g(\S^N)^\Gamma$ there exists $\widetilde u\in \cC^{m-1}(\S^N\smallsetminus Z)^\Gamma$ such that $u=\widetilde u$ a.e. in $\S^N$.
\end{proposition}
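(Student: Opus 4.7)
The plan is to exploit the one-dimensional nature of the $\Gamma$-orbit space: I will write $u$ as a pullback $w\circ q$ along the orbit map $q$ from \eqref{eq:orbit_map}, use Lemmas~\ref{lem:isometry2} and~\ref{p} to transfer the regularity from $\sn$ to the one-variable function $w$ on $(0,\pi)$, and then invoke the standard one-dimensional Sobolev embedding.

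First I would approximate. Since $\Gamma$ is a compact Lie group acting isometrically on $(\sn,g)$, averaging a standard sequence of $\cC^\infty(\sn)$-mollifications of $u$ over the normalized Haar measure of $\Gamma$ produces $u_n\in\cC^\infty(\sn)^\Gamma$ with $u_n\to u$ in $(H^m_g(\sn),\|\cdot\|_*)$. Each $u_n$ is constant on $\Gamma$-orbits and $q$ is a quotient map, so there are $w_n\in\cC^\infty[0,\pi]$ with $u_n=w_n\circ q$. Fix $\eps>0$ and let $\mathbf{k},A$ be the constants provided by Lemma~\ref{p}. Combining Lemmas~\ref{lem:isometry2} and~\ref{p},
\[
\|u_n-u_j\|_{\mathbf{k},*}^{2}=\|w_n-w_j\|_{\mathbf{k},h}^{2}\ge A^{2}\,\|w_n-w_j\|_{H^{m}(\eps,\pi-\eps)}^{2}.
\]
Since $\|\cdot\|_{\mathbf{k},*}$ is equivalent to $\|\cdot\|_*$ and $(u_n)$ is Cauchy in the latter, the sequence $(w_n)$ is Cauchy in $H^m(\eps,\pi-\eps)$. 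Diagonalizing over a sequence $\eps_j\downarrow 0$ then yields $w\in H^m_{\mathrm{loc}}(0,\pi)$ such that $w_n\to w$ in $H^m(\eps,\pi-\eps)$ for every $\eps>0$.

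Next, by the one-dimensional Sobolev embedding $H^m(\eps,\pi-\eps)\hookrightarrow \cC^{m-1}[\eps,\pi-\eps]$, the limit satisfies $w\in\cC^{m-1}(0,\pi)$ and the convergence $w_n\to w$ is $\cC^{m-1}$ on every compact subinterval of $(0,\pi)$. I set $\widetilde u:=w\circ q$; the orbit map $q$ is smooth on $\sn\smallsetminus Z$ (since $|\nabla_g q|_g^{2}=4\neq 0$ there, as recorded in the proof of Lemma~\ref{lem:isometry2}), so $\widetilde u\in\cC^{m-1}(\sn\smallsetminus Z)^\Gamma$. To identify $u=\widetilde u$ almost everywhere, I would extract a further subsequence along which $u_n\to u$ a.e.\ on $\sn$; on the open full-measure set $\sn\smallsetminus Z$ we simultaneously have $u_n=w_n\circ q\to w\circ q=\widetilde u$ locally uniformly, so $u=\widetilde u$ a.e.

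The step I expect to be most delicate is the approximation, namely producing smooth $\Gamma$-invariant approximations of an arbitrary $u\in H^m_g(\sn)^\Gamma$: standard mollifications on $\sn$ break the symmetry and must be symmetrized by averaging over $\Gamma$ (alternatively, by projecting a spectral decomposition of $\sP^m_g$ onto the $\Gamma$-invariant subspace), and one must verify that this symmetrization is compatible with the $\|\cdot\|_*$ norm. Once this is in place, the argument is a fairly mechanical assembly of Lemmas~\ref{lem:isometry2} and~\ref{p} with standard one-dimensional Sobolev theory.
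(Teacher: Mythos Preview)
Your proposal is correct and follows essentially the same route as the paper: reduce to the one-dimensional profile $w$ via the orbit map $q$, use Lemmas~\ref{lem:isometry2} and~\ref{p} to control $\|w\|_{H^m(\eps,\pi-\eps)}$ by $\|u\|_*$, and then apply the one-dimensional Sobolev embedding. The paper's proof is simply a more compressed version of yours: it asserts directly that the map $u\mapsto w$ from $H^m_g(\Theta_\eps)^\Gamma$ to $H^m(\eps,\pi-\eps)$ is continuous and then patches the resulting $\cC^{m-1}$ representatives over $\eps$, whereas you spell out the underlying Cauchy-sequence/diagonal argument and the a.e.\ identification explicitly. Your flagged ``delicate'' step---producing smooth $\Gamma$-invariant approximants via Haar averaging---is exactly what the paper leaves implicit when it invokes the lemmas (stated only for smooth $w$) for general $u$; your justification through the isometric $\Gamma$-action (Proposition~\ref{prop:sobolev_action}) is the right one.
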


\begin{proof}
Fix $\eps>0$ and let $\Theta_\eps:=q^{-1}(\eps,\pi-\eps)$. Then, by Lemmas \ref{lem:isometry2} and \ref{p}, there exists $C>0$, depending on $\eps$, such that $\|u\|_*\geq C\|w\|_{H^m(\eps,\pi-\eps)}$ because the norm defined in \eqref{k} is equivalent to $\|\cdot\|_*$.
Therefore, the map
$$H^{m}_g(\Theta_\eps)^\Gamma\to H^m(\eps,\pi-\eps),\qquad u\mapsto w,\quad\text{ \ where \ }u=w\circ q,$$
is continuous. Sobolev's theorem yields a continuous embedding $H^m(\eps,\pi-\eps)\hookrightarrow\cC^{m-1}(\eps,\pi-\eps)$. Thus, for $u\in H^m_g(\S^N)^\Gamma$ and $w$ given by $u=w\circ q$, there exists $w_\eps\in\cC^{m-1}(\eps,\pi-\eps)$ such that $w=w_\eps$ a.e. in $(\eps,\pi-\eps)$. So $u_\eps:=w_\eps\circ q\in\cC^{m-1}(\Theta_\eps)$ and $u=u_\eps$ a.e. in $\Theta_\eps$. The function $\tilde u(p):=u_\eps(p)$ if $p\in\Theta_\eps$ is well defined and of class $\cC^{m-1}$ on $\sn\smallsetminus Z$, and it coincides a.e. with $u$.
\end{proof}

\begin{remark}\label{b:rmk}
\emph{
Let $u\in H^m_g(\S^N)^\Gamma$. Since $u$ is $\Gamma$-invariant, there is $w:[0,\pi]\to \R$ such that $u=w\circ q$, with $q$ as in \eqref{eq:orbit_map}. As a consequence, problem \eqref{eq:problem} can be seen as an ODE.}

\emph{
In particular, if $m=1$, Lemma~\ref{lem:isometry2} yields that
\begin{align*}
\|u\|_*^2=\int_0^\pi \left(|w'(t)|^2 + \frac{c_1}{4}\,  w(t)^2\right)h(t)\ dt,
\end{align*}
where the constant $c_1$ is as in \eqref{Eq:JGM-Operator}. As a consequence, $u\in H^m_g(\S^N)^\Gamma$ is a solution to the Yamabe equation \eqref{eq:problem} with $m=1$ iff $w$ solves the ODE
\begin{align*}
 -(w'h)'+ \frac{c_1}{4}\,  w\, h=-w''h-w'h'+ \frac{c_1}{4}\,  w\, h = \frac{h}{4}|w|^{2^*_1-2}w\quad \text{ in }(0,\pi).
\end{align*}
A careful study of this ODE is performed in \cite{fp} to obtain existence of solutions to the Yamabe equation on the sphere with exactly $\ell$-nodal regions for any $\ell\in\mathbb{N}$.  A similar analysis is much harder for $m\geq 2$, where the coefficients of the ODE are more complex. For instance, if  $u\in H^2_g(\S^N)^\Gamma$ is a solution of \eqref{eq:problem} with $m=2$ and $w:[0,\pi]\to \R$ is such that $u=w\circ q$, then, by Lemma~\ref{lem:isometry2},
\begin{align*}
 \|u\|_*^2&=\|w\|^2_{(a_0,a_1,1),h}
 =\frac{a_0}{4}\int_0^\pi |w|^2 h\; \;dt
 + a_1 \int_0^\pi |w'|^2 h\; \;dt
 +\frac{1}{4}\int_0^\pi |
 4w'' + \phi(t)w'|^2 h\; \;dt\\
  &=\int_0^\pi 
 \left(
4 w''(t)^2
+\left(\frac{1}{4} \phi(t)^2+a_1\right) w'(t)^2
+ 2 \phi(t) w'(t) w''(t) 
   +\frac{a_0}{4} w(t)^2
   \right)h(t)  \ \mathrm{d}t,
\end{align*}
where $a_0=c_1c_2$, $a_1=c_1+c_2$, and $c_1,c_2$ are given in \eqref{Eq:JGM-Operator}. The associated fourth-order ODE for \eqref{eq:problem} with $m=2$ is
\begin{align*}
&4 h\, w''''+ 8 h'\, w'''+C_1\, w''+C_2\, w'+ \frac{a_0}{4} h\, w(t)=\frac{h}{4}|w|^{2^*_2-2}w\quad \text{ in }(0,\pi),
\end{align*}
where
\begin{align*}
 C_1(t)&:=4 h''(t)+2 \phi(t) h'(t)+2 h(t)\phi'(t)-\frac{1}{4} h(t) \phi(t)^2 - a_1 h(t),\\
 C_2(t)&:=4 h'(t) \phi'(t)+ 2 \phi(t) h''(t)- \frac{1}{4} \phi(t)^2 h'(t)- a_1 h'(t)+2 h(t) \phi''(t)- \frac{1}{2} h(t) \phi(t) \phi'(t).
\end{align*}}
\end{remark}

\section{The polyharmonic system}
\label{sec:system}

We fix $\Gamma:=O(n_1)\times O(n_2)$ with $n_1,n_2\geq 2$ and $n_1+n_2=N+1$ and we study the system \eqref{eq:system}. Let $\cH:=(D^{1,2}(\rn)^\Gamma)^\ell$ with the norm
\begin{equation*}\label{Eq:NormProduct}
\Vert \bar{u}\Vert =\Vert(u_1,\ldots,u_\ell)\Vert = \Big(\sum_{i=1}^\ell \Vert u_i \Vert^2\Big)^{1/2},
\end{equation*}
and $\mathcal{J}:\mathcal{H}\rightarrow\R$ be the functional given by
\[
\mathcal{J}(\bar{u}):=\frac{1}{2}\sum_{i=1}^\ell\Vert u_i\Vert^2 - \frac{1}{2^*_m}\sum_{i=1}^\ell\int_{\R^N}\mu_i\vert u_i\vert^{2^*_m} - \frac{1}{2}\sum_{\substack{i,j=1 \\ j\neq i}}^\ell\int_{\R^N}\lambda_{ij}\vert u_j\vert^{\alpha_{ij}}\vert u_i\vert^{\beta_{ij}}.
\]
This is a $\cC^1$-functional and, by the principle of symmetric criticality \cite{palais}, its critical points are the solutions of \eqref{eq:system}. Observe that the fully nontrivial critical points of $\cJ$ belong to the set
\[
\mathcal{N}:=\{\bar{u}\in\mathcal{H}\;:\; u_i\neq 0,\ \partial_i\mathcal{J}(\bar{u})u_i = 0, \text{ for each }i=1,\ldots,\ell\}.
\]
Note also that, for each $i$,
\begin{align*}
&\partial_i\mathcal{J}(\bar u)u_i=\|u_i\|^2 - \irn \mu_i|u_i|^{2_m^*} - \sum_{\substack{j=1 \\ j\neq i}}^\ell\irn\lambda_{ij}\beta_{ij}|u_j|^{\alpha_{ij}}|u_i|^{\beta_{ij}}.
\end{align*}
It is readily seen that 
\begin{equation} \label{eq:energy_nehari}
\cJ(\bar u)=\frac{m}{N}\|\bar u\|^2\qquad\text{if \ }\bar u\in\cN.
\end{equation}

\begin{lemma} \label{lem:away_from_0}
There exists $d_0>0$, independent of $\lambda_{ij}$, such that $\min_{i=1,\ldots,\ell}\|u_i\|\geq d_0$ if $\bar u=(u_1,\ldots,u_\ell)\in \cN$. Thus, $\cN$ is a closed subset of $\cH$ and $\inf_\cN\cJ>0$.
\end{lemma}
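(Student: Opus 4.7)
The plan is to exploit the Nehari identity $\partial_i\cJ(\bar u)u_i=0$ together with the sign assumption $\lambda_{ij}<0$. Explicitly, for $\bar u=(u_1,\ldots,u_\ell)\in\cN$ the definition yields
\begin{equation*}
\|u_i\|^2 \;=\; \mu_i\irn|u_i|^{2^*_m} \;+\; \sum_{j\neq i}\lambda_{ij}\beta_{ij}\irn|u_j|^{\alpha_{ij}}|u_i|^{\beta_{ij}}.
\end{equation*}
Since $\lambda_{ij}\beta_{ij}<0$ and every integrand in the sum is non-negative, the coupling sum is non-positive, so one obtains the clean inequality $\|u_i\|^2\leq \mu_i\int|u_i|^{2^*_m}$. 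I would then invoke the plain Sobolev embedding $D^{m,2}(\rn)\hookrightarrow L^{2^*_m}(\rn)$ on the full space, with constant $S>0$ such that $\|u\|_{L^{2^*_m}}^2\leq S^{-1}\|u\|^2$, to deduce
\begin{equation*}
\|u_i\|^2 \;\leq\; \mu_i\, S^{-2^*_m/2}\|u_i\|^{2^*_m}.
\end{equation*}
As $u_i\neq 0$, dividing through and taking $(2^*_m-2)$-th roots gives the explicit bound $\|u_i\|\geq (\mu_i^{-1}S^{2^*_m/2})^{1/(2^*_m-2)}$. Setting $d_0$ to be the minimum of these quantities over $i=1,\ldots,\ell$ yields the first assertion, manifestly independent of the $\lambda_{ij}$.

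For closedness, suppose $\bar u_n\in\cN$ converges to $\bar u$ in $\cH$. Each component then satisfies $\|u_i\|=\lim_n\|u_{n,i}\|\geq d_0>0$, so $u_i\neq 0$; continuity of the $\cC^1$ functional $\cJ$, and hence of each derivative $\partial_i\cJ$, gives $\partial_i\cJ(\bar u)u_i=\lim_n\partial_i\cJ(\bar u_n)u_{n,i}=0$, so $\bar u\in\cN$. Finally, applying \eqref{eq:energy_nehari} together with the lower bound produces
\begin{equation*}
\cJ(\bar u)\;=\;\frac{m}{N}\|\bar u\|^2\;=\;\frac{m}{N}\sum_{i=1}^\ell\|u_i\|^2\;\geq\;\frac{m\,\ell}{N}d_0^2\;>\;0,
\end{equation*}
so $\inf_\cN\cJ>0$.

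There is no substantive obstacle here: the argument is the standard Nehari-set bootstrap, using only that the coupling terms in $\partial_i\cJ(\bar u)u_i$ carry an unfavorable sign for competitive systems and hence can be discarded to leave a pure Sobolev-type inequality. The single subtlety worth flagging is that the relevant embedding constant is the one for the full space $D^{m,2}(\rn)$ rather than from the compact $\Gamma$-invariant embedding of Lemma~\ref{Lemma:Sobolev}; compactness is not needed to obtain a lower bound, and using only the continuous embedding ensures $d_0$ depends solely on $m$, $N$, and the $\mu_i$, which is essential for the segregation analysis in Section~\ref{sec:segregation} where $\lambda_{ij,k}\to-\infty$.
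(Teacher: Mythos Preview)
Your proof is correct and follows exactly the same approach as the paper: drop the nonpositive coupling terms from the Nehari identity and apply the Sobolev inequality to obtain $\|u_i\|^2\leq C\|u_i\|^{2^*_m}$, hence a uniform lower bound independent of $\lambda_{ij}$. The paper's proof is terser and leaves the closedness and $\inf_\cN\cJ>0$ as immediate consequences, but your more explicit treatment of these is fine.
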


\begin{proof}
From $\lambda_{ij}<0$ and Sobolev's inequality we obtain
\begin{align*}
\|u_i\|^2\leq \irn \mu_i|u_i|^{2_m^*}\leq C\|u_i\|^{2_m^*}\quad \text{ for \ }\bar u\in \cN, \ i=1,\ldots,\ell,
\end{align*}
with $C>0$.
\end{proof}

\begin{definition} \label{def:least energy}
A fully nontrivial solution $\bar u$ to the system \eqref{eq:system} satisfying $\cJ(\bar u)=\inf_\cN\cJ$ is called a least energy solution.
\end{definition}

To establish the existence of fully nontrivial critical points of $\cJ$ we follow the variational approach introduced in \cite{cs}. 

Given $\bar{u}=(u_1,\ldots,u_\ell)$ and $\bar{s}=(s_1,\ldots,s_\ell)\in(0,\infty)^\ell$, we write
\[
\bar{s}\bar{u}:= (s_1u_1,\ldots,s_\ell u_\ell).
\]
Let $\mathcal S:=\{u\in D^{1,2}(\rn)^\Gamma:\|u\|=1\}$, $\cT:=\mathcal S^\ell$, and define
$$\cU:=\{\bar{u}\in\cT:\bar{s}\bar{u}\in\cN\text{ \ for some \ }\bar s\in(0,\infty)^\ell\}.$$

\begin{lemma} \label{lem:U}
\begin{itemize}
\item[$(i)$] Let $\bar u\in\cT$. If there exists $\bar s_{\bar u}\in(0,\infty)^\ell$ such that $\bar s_{\bar u}\bar u\in\cN$, then $\bar s_{\bar u}$ is unique and satisfies
$$\cJ(\bar s_{\bar u}\bar u)=\max_{\bar s\in(0,\infty)^\ell}\cJ(\bar s\bar u).$$
\item[$(ii)$] $\cU$ is a nonempty open subset of $\cT$, and the map $\cU\to(0,\infty)^\ell$ given by $\bar u\mapsto\bar s_{\bar u}$ is continuous.
\item[$(iii)$] The map $\cU\to \cN$ given by $\bar u\mapsto\bar s_{\bar u}\bar u$ is a homeomorphism.
\item[$(iv)$] If $(\bar u_n)$ is a sequence in $\cU$ and $\bar u_n\to\bar u\in\partial\cU$, then $|\bar s_{\bar u_n}|\to\infty$.
\end{itemize}
\end{lemma}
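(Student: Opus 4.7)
The proof centers on the fibering map $\phi_{\bar u}:(0,\infty)^\ell\to\R$ defined by $\phi_{\bar u}(\bar s):=\cJ(\bar s\bar u)$, which (since $\|u_i\|=1$ for $\bar u\in\cT$) reads
\[
\phi_{\bar u}(\bar s)=\tfrac{1}{2}\sum_{i=1}^\ell s_i^2 -\tfrac{1}{2_m^*}\sum_{i=1}^\ell\mu_i A_i s_i^{2_m^*}-\tfrac{1}{2}\sum_{i\ne j}\lambda_{ij}B_{ij}s_i^{\beta_{ij}}s_j^{\alpha_{ij}},
\]
with $A_i:=\irn|u_i|^{2_m^*}$ and $B_{ij}:=\irn|u_j|^{\alpha_{ij}}|u_i|^{\beta_{ij}}$ (note $B_{ij}=B_{ji}$ by the symmetry $\alpha_{ji}=\beta_{ij}$). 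Since all $s_i>0$, the condition $\bar s\bar u\in\cN$ is equivalent to $\nabla\phi_{\bar u}(\bar s)=0$. Part (i) thus reduces to showing that $\phi_{\bar u}$ has at most one critical point in $(0,\infty)^\ell$ and that it realizes the global maximum.

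The heart of the matter is uniqueness. If $\bar s,\bar\tau\in(0,\infty)^\ell$ both solve the Nehari system, I set $\rho:=\max_i s_i/\tau_i$ and let $k$ attain this maximum; swapping $\bar s$ and $\bar\tau$ if needed, I may assume $\rho\ge 1$. Using $s_j\le\rho\tau_j$ for every $j$ together with the key sign $\lambda_{kj}\beta_{kj}<0$ in the $k$-th Nehari equation $s_k^2=\mu_k A_k s_k^{2_m^*}+\sum_{j\ne k}\lambda_{kj}\beta_{kj}B_{kj}s_k^{\beta_{kj}}s_j^{\alpha_{kj}}$, and then dividing by $\rho^2$, the critical-exponent identity $\alpha_{kj}+\beta_{kj}=2_m^*$ absorbs the extra powers of $\rho$ in the cross terms and yields
\[
\tau_k^2 \ge \rho^{2_m^*-2}\Bigl(\mu_k A_k\tau_k^{2_m^*}+\sum_{j\ne k}\lambda_{kj}\beta_{kj}B_{kj}\tau_k^{\beta_{kj}}\tau_j^{\alpha_{kj}}\Bigr)=\rho^{2_m^*-2}\tau_k^2,
\]
the last equality being the $k$-th Nehari equation for $\bar\tau$. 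Since $2_m^*>2$ this forces $\rho\le 1$, hence $\rho=1$; applying the same argument with $\bar s$ and $\bar\tau$ swapped yields $\bar s=\bar\tau$. For the maximum property, along the ray $r\mapsto\phi_{\bar u}(r\bar s_{\bar u})$ the function takes the form $\tfrac{r^2}{2}C_1-r^{2_m^*}C_2$ with $C_1,C_2>0$ (positivity of $C_2$ being forced by the summed Nehari identity at $\bar s_{\bar u}$), so $r=1$ is the strict global maximum along this ray. Combining this with uniqueness of the interior critical point, with $\phi_{\bar u}(\bar 0)=0$ and the subquadratic growth of $\phi_{\bar u}$ along rays where $C_2>0$, one upgrades this to the global maximum on $(0,\infty)^\ell$.

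For (ii)--(iv) the plan is fairly standard. $\cU\ne\emptyset$: pick $\bar u\in\cT$ with pairwise disjoint supports so that all $B_{ij}=0$, reducing the Nehari system to $\ell$ decoupled scalar problems each solvable by the usual one-variable Nehari argument. Openness of $\cU$ and continuity (indeed smoothness) of $\bar u\mapsto\bar s_{\bar u}$: apply the implicit function theorem to $F:\cT\times(0,\infty)^\ell\to\R^\ell$, $F_i(\bar u,\bar s):=\partial_i\cJ(\bar s\bar u)u_i$, once $D_{\bar s}F$ has been shown to be invertible at any Nehari point---any nontrivial kernel element would linearize to a second critical direction and contradict the uniqueness proved in (i). For (iii), (i) already gives a bijection $\cU\to\cN$, $\bar u\mapsto\bar s_{\bar u}\bar u$, whose inverse $\bar v\mapsto(v_1/\|v_1\|,\dots,v_\ell/\|v_\ell\|)$ is continuous because $\|v_i\|\ge d_0>0$ on $\cN$ by Lemma \ref{lem:away_from_0}. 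For (iv), if $\bar u_n\to\bar u\in\partial\cU$ and $|\bar s_{\bar u_n}|$ stays bounded, a subsequence converges to some $\bar s^\infty\in[0,\infty)^\ell$; Lemma \ref{lem:away_from_0} applied to $\bar s_{\bar u_n}\bar u_n\in\cN$ forces $s_i^\infty\ge d_0>0$ for each $i$, and passing to the limit in the Nehari equations gives $\bar s^\infty\bar u\in\cN$, i.e.\ $\bar u\in\cU$, contradicting $\bar u\in\partial\cU$ and the openness of $\cU$.

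The main obstacle is the uniqueness in (i); everything else is IFT and continuity bookkeeping once uniqueness is in hand. The uniqueness itself is a delicate monotonicity argument that simultaneously exploits the competitive sign $\lambda_{ij}<0$ (so that $s_j\le\rho\tau_j$ flips the inequality when multiplied by $\lambda_{kj}\beta_{kj}$) and the homogeneity identity $\alpha_{ij}+\beta_{ij}=2_m^*>2$ (which consolidates the powers of $\rho$); weakening either hypothesis would break the chain of inequalities.
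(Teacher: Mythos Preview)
Your approach is precisely the one the paper invokes (it defers entirely to \cite[Proposition 3.1]{cs}), and your ratio argument $\rho=\max_i s_i/\tau_i$ for uniqueness in $(i)$ is correct and is the heart of that proof. Parts $(iii)$ and $(iv)$ are handled correctly.

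Two steps, however, are not justified as written. First, the passage from ``$r=1$ maximizes $\phi_{\bar u}$ along the ray through $\bar s_{\bar u}$'' to the \emph{global} maximum on $(0,\infty)^\ell$ does not follow from uniqueness of the interior critical point plus $\phi_{\bar u}(\bar 0)=0$: you have not excluded directions $\bar\tau$ with $C_2(\bar\tau)\le 0$, along which $\phi_{\bar u}(r\bar\tau)\to+\infty$, and even when $C_2(\bar\tau)>0$ the ray-maximum is only a radial critical point, so uniqueness of critical points in $(0,\infty)^\ell$ gives no comparison. The clean argument is: set $t_i:=\tau_i/s_i$ with $\bar s:=\bar s_{\bar u}$, use the Nehari identities at $\bar s$ to rewrite $\phi_{\bar u}(\bar\tau)$, then apply Young's inequality $t_i^{\beta_{ij}}t_j^{\alpha_{ij}}\le\tfrac{\beta_{ij}}{2_m^*}t_i^{2_m^*}+\tfrac{\alpha_{ij}}{2_m^*}t_j^{2_m^*}$ to the cross terms (the competitive sign $\lambda_{ij}<0$ makes the inequality go the right way); after regrouping and invoking Nehari once more one obtains $\phi_{\bar u}(\bar\tau)\le\sum_i\big(\tfrac{t_i^2}{2}-\tfrac{t_i^{2_m^*}}{2_m^*}\big)s_i^2\le\tfrac{m}{N}|\bar s|^2=\phi_{\bar u}(\bar s)$. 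Second, in $(ii)$ your justification for the invertibility of $D_{\bar s}F$---that a nontrivial kernel element ``would linearize to a second critical direction and contradict the uniqueness proved in $(i)$''---is not valid: uniqueness of critical points does not imply nondegeneracy (consider $t\mapsto -t^4$). One must show directly that the Hessian of $\phi_{\bar u}$ at $\bar s_{\bar u}$ is negative definite, which again uses the Nehari identities and the sign $\lambda_{ij}<0$; this is what the argument in \cite{cs} does.
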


\begin{proof}
The same arguments used in the proof of \cite[Proposition 3.1]{cs} give the proof of this result.
\end{proof}

Define $\Psi:\cU\to\r$ as 
\begin{equation*}
\Psi(\bar u): = \cJ(\bar s_{\bar u}\bar u).
\end{equation*}
According to Lemma \ref{lem:U}, $\cU$ is an open subset of the smooth Hilbert submanifold $\cT$ of $\cH$. If $\Psi$ is of class $\cC^1$ we write $\|\Psi'(\bar u)\|_*$ for the the norm of $\Psi'(\bar u)$ in the cotangent space $\mathrm{T}_{\bar u}^*(\cT)$ to $\cT$ at $\bar u$, i.e.,
$$\|\Psi'(\bar u)\|_*:=\sup\limits_{\substack{\bar v\in\mathrm{T}_{\bar u}(\cU) \\\bar v\neq 0}}\frac{|\Psi'(\bar u)\bar v|}{\|\bar v\|},$$
where $\mathrm{T}_{\bar u}(\cU)$ is the tangent space to $\cU$ at $\bar u$.

Recall that a sequence $(\bar u_n)$ in $\cU$ is called a $(PS)_c$\emph{-sequence for} $\Psi$ if $\Psi(\bar u_n)\to c$ and $\|\Psi'(\bar u_n)\|_*\to 0$, and $\Psi$ is said to satisfy the $(PS)_c$\emph{-condition} if every such sequence has a convergent subsequence. Similarly, a $(PS)_c$\emph{-sequence for} $\cJ$ is a sequence $(\bar u_n)$ in $\cH$ such that $\cJ(\bar u_n)\to 0$ and $\|\cJ'(\bar u_n)\|_{\cH'}\to 0$, and $\cJ$ satisfies the $(PS)_c$\emph{-condition} if any such sequence has a convergent subsequence.   Here $\cH'$ denotes, as usual, the dual space of $\cH$.

\begin{lemma} \label{lem:psi}
\begin{itemize}
\item[$(i)$] $\Psi\in\cC^1(\cU,\r)$,
\begin{equation*}
\Psi'(\bar u)\bar v = \cJ'(\bar s_{\bar u}\bar u)[\bar s_{\bar u}\bar v] \quad \text{for all } \bar u\in\cU \text{ and }\bar v\in \mathrm{T}_{\bar u}(\cU),
\end{equation*}
and there exists $d_0>0$ such that
$$d_0\,\|\cJ'(\bar s_{\bar u}\bar u)\|_{\cH'}\leq\|\Psi'(\bar u)\|_*\leq |\bar s_{\bar u}|_\infty\|\cJ'(\bar s_{\bar u}\bar u)\|_{\cH'}\quad \text{for all } \bar u\in\cU,$$
where $|\bar s|_\infty=\max\{|s_1|,\ldots,|s_q|\}$ if $\bar s=(s_1,\ldots,s_q)$.
\item[$(ii)$] If $(\bar u_n)$ is a $(PS)_c$-sequence for $\Psi$, then $(\bar s_{\bar u_n}\bar u_n)$ is a $(PS)_c$-sequence for $\cJ$.
\item[$(iii)$] $\bar u$ is a critical point of $\Psi$ if and only if $\bar s_{\bar u}\bar u$ is a critical point of $\cJ$ if and only if $\bar s_{\bar u}\bar u$ is a fully nontrivial solution of \eqref{eq:system}.
\item[$(iv)$] If $(\bar u_n)$ is a sequence in $\cU$ and $\bar u_n\to\bar u\in\partial\cU$, then $|\Psi(\bar u_n)|\to\infty$.
\item[$(v)$]$\bar{u}\in\cU$ if and only if $-\bar{u}\in\cU$, and $\Psi(\bar u)=\Psi(-\bar u)$.
\end{itemize}
\end{lemma}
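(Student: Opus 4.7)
The structure of Lemma \ref{lem:psi} is standard Nehari-manifold bookkeeping once Lemma \ref{lem:U} is in hand, and my plan is to treat it in the same spirit as \cite[Proposition~3.1]{cs}, adapted to the polyharmonic norm. The main simplification comes from the identity $\cJ(\bar s\bar u)=\frac{m}{N}\|\bar s\bar u\|^2$ on $\cN$ (equation \eqref{eq:energy_nehari}), which together with $\|u_i\|=1$ on $\cT$ reduces energies to $\Psi(\bar u)=\frac{m}{N}|\bar s_{\bar u}|^2$, with $|\bar s|^2=\sum_i s_i^2$. Every assertion will follow once I know that the map $\bar u\mapsto\bar s_{\bar u}$ is actually $\cC^1$, not merely continuous.

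For $(i)$, my plan is to upgrade the continuity in Lemma \ref{lem:U}(ii) to $\cC^1$ smoothness via the implicit function theorem. Define $F:\cU\times(0,\infty)^\ell\to\R^\ell$ componentwise by $F_i(\bar u,\bar s):=\partial_i\cJ(\bar s\bar u)[s_iu_i]/s_i$. Then $F(\bar u,\bar s_{\bar u})=0$ and the Jacobian $\partial_{\bar s}F$ at $(\bar u,\bar s_{\bar u})$ is invertible because of the strict concavity along each ray forced by $2^*_m>2$ and $\alpha_{ij}+\beta_{ij}=2^*_m$ combined with $\lambda_{ij}<0$; this is where Lemma \ref{lem:U}(i) secretly lives. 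The derivative formula $\Psi'(\bar u)\bar v=\cJ'(\bar s_{\bar u}\bar u)[\bar s_{\bar u}\bar v]$ then drops out because the $\partial\bar s$ piece of the chain rule vanishes by $\bar s_{\bar u}\bar u\in\cN$ (noting that $\cN$ is the zero set of the relevant functionals). The norm comparison comes from writing an arbitrary $\bar w\in\mathrm{T}_{\bar s_{\bar u}\bar u}\cH=\cH$ as $\bar w=\bar s_{\bar u}\bar v+\bar r$ where $\bar v$ is tangent at $\bar u$ and $\bar r$ is radial (so killed by $\cJ'$ on $\cN$); the two inequalities then come from $\min_i s_{\bar u,i}\leq|\bar s_{\bar u}|_\infty$ and from the lower bound on $\min_i s_{\bar u,i}$, which follows from combining Lemma \ref{lem:away_from_0} with the energy identity $\|\bar s_{\bar u}\bar u\|^2=|\bar s_{\bar u}|^2$.

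Parts $(ii)$ and $(iii)$ are direct consequences of $(i)$. For $(ii)$, given a $(PS)_c$-sequence $(\bar u_n)$ for $\Psi$, the energy bound $c=\frac{m}{N}|\bar s_{\bar u_n}|^2+o(1)$ pins $|\bar s_{\bar u_n}|$ to a compact subset of $(0,\infty)^\ell$ (bounded above by $c$, bounded below by the uniform lower bound from Lemma~\ref{lem:away_from_0} transplanted back to $\cT$), so the factor $|\bar s_{\bar u_n}|_\infty$ in $(i)$ is harmless and $\|\cJ'(\bar s_{\bar u_n}\bar u_n)\|_{\cH'}\to 0$. Part $(iii)$ is immediate from $(i)$ since all components of $\bar s_{\bar u}$ are strictly positive: $\Psi'(\bar u)=0$ iff $\cJ'(\bar s_{\bar u}\bar u)$ annihilates the tangent space $\mathrm{T}_{\bar u}\cT$; combined with the Nehari identities $\partial_i\cJ(\bar s_{\bar u}\bar u)u_i=0$ and the decomposition used above this forces $\cJ'(\bar s_{\bar u}\bar u)=0$, and conversely.

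For $(iv)$ I simply combine Lemma \ref{lem:U}(iv) with $\Psi(\bar u_n)=\frac{m}{N}|\bar s_{\bar u_n}|^2\to\infty$. Part $(v)$ is a direct inspection: the nonlinearity in $\cJ$ involves only $|u_i|^{2^*_m}$ and $|u_j|^{\alpha_{ij}}|u_i|^{\beta_{ij}}$, so $\cJ(-\bar u)=\cJ(\bar u)$; hence $\bar u\in\cN\Leftrightarrow-\bar u\in\cN$, which propagates to $\cU$ and yields $\bar s_{-\bar u}=\bar s_{\bar u}$, giving $\Psi(-\bar u)=\Psi(\bar u)$. The only genuine obstacle is the invertibility of $\partial_{\bar s}F$ in $(i)$, which requires exploiting the structural assumptions $\lambda_{ij}<0$ and $\alpha_{ij}+\beta_{ij}=2^*_m$ to produce a strictly diagonally dominant (in fact negative definite) matrix at the critical scaling; everything else is bookkeeping.
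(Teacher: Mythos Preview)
Your proposal is correct and is exactly the approach the paper takes: the paper's entire proof is the single sentence ``These statements are proved arguing exactly as in \cite[Theorem 3.3]{cs},'' and your sketch reproduces that argument (implicit function theorem for $\bar u\mapsto\bar s_{\bar u}$, chain rule with the radial piece killed by the Nehari identities, and the two-sided norm estimate via the tangent/radial splitting and Lemma~\ref{lem:away_from_0}). The only cosmetic slip is the citation: the relevant result in \cite{cs} is Theorem~3.3, not Proposition~3.1 (the latter is what the paper invokes for Lemma~\ref{lem:U}).
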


\begin{proof}
These statements are proved arguing exactly as in \cite[Theorem 3.3]{cs}.
\end{proof}

\begin{lemma}
$\Psi$ satisfies the $(PS)_c$-condition for every $c\in\r$.
\end{lemma}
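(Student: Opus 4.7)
The plan is to transfer the Palais--Smale condition from $\Psi$ to $\cJ$ via Lemma \ref{lem:psi}(ii), and then use the compact Sobolev embedding for $\Gamma$-invariant functions from Lemma \ref{Lemma:Sobolev} to extract a strongly convergent subsequence. Given a $(PS)_c$-sequence $(\bar u_n) \subset \cU$ for $\Psi$, I would set $\bar v_n := \bar s_{\bar u_n}\bar u_n$; by Lemma \ref{lem:psi}(ii), $\bar v_n \in \cN$ and $(\bar v_n)$ is a $(PS)_c$-sequence for $\cJ$ in $\cH$.

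The first step is boundedness: by the Nehari identity \eqref{eq:energy_nehari}, $\cJ(\bar v_n) = \frac{m}{N}\|\bar v_n\|^2 \to c$ forces $\|\bar v_n\|^2 \to \frac{Nc}{m}$. Passing to a subsequence, $v_{n,i} \rightharpoonup v_i$ weakly in $D^{m,2}(\rn)^\Gamma$ and, by Lemma \ref{Lemma:Sobolev}, strongly in $L^{2^*_m}(\rn)$ for each $i=1,\ldots,\ell$. The strong $L^{2^*_m}$ convergence allows one to pass to the limit in $\cJ'(\bar v_n)\bar\varphi = o(\|\bar\varphi\|)$ against any fixed $\bar\varphi \in \cH$, showing that the weak limit $\bar v := (v_1,\ldots,v_\ell)$ is a critical point of $\cJ$ on $\cH$.

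The main step is to upgrade weak to strong convergence in $\cH$. Testing $\cJ'(\bar v_n)$ against $(0,\ldots,v_{n,i},\ldots,0)$ yields
\begin{equation*}
\|v_{n,i}\|^2 = \irn \mu_i |v_{n,i}|^{2^*_m} + \sum_{j \neq i} \irn \lambda_{ij}\beta_{ij}|v_{n,j}|^{\alpha_{ij}}|v_{n,i}|^{\beta_{ij}} + o(1).
\end{equation*}
By the strong $L^{2^*_m}$ convergence, the right-hand side converges to the analogous expression in $v_i$, which equals $\|v_i\|^2$ because $\cJ'(\bar v)=0$. Hence $\|v_{n,i}\| \to \|v_i\|$ for each $i$, and the Hilbert-space identity
\begin{equation*}
\|v_{n,i} - v_i\|^2 = \|v_{n,i}\|^2 - 2\langle v_{n,i},v_i\rangle + \|v_i\|^2 \longrightarrow 0
\end{equation*}
gives $v_{n,i} \to v_i$ strongly in $D^{m,2}(\rn)^\Gamma$.

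Finally, I would return to the manifold $\cU$. Since $\bar u_n \in \cT$, we have $s_{\bar u_n,i} = \|v_{n,i}\|$, and by Lemma \ref{lem:away_from_0} these are bounded below by $d_0 > 0$; thus $\|v_i\| \geq d_0 > 0$ and $\bar v \in \cN$. Setting $\bar u := (v_1/\|v_1\|,\ldots,v_\ell/\|v_\ell\|) \in \cT$, one has $\bar u_n \to \bar u$ in $\cT$ and $\bar s_{\bar u}\bar u = \bar v \in \cN$, so $\bar u \in \cU$. (Alternatively, Lemma \ref{lem:psi}(iv) already precludes any limit point on $\partial\cU$, since $\Psi(\bar u_n)$ is bounded.) I expect no substantial difficulty beyond the usual critical-exponent concerns: the $\Gamma$-invariance, through Lemma \ref{Lemma:Sobolev}, is exactly what restores compactness at the critical Sobolev exponent, so the argument proceeds essentially as in the subcritical case.
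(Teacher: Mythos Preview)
Your proposal is correct and follows essentially the same route as the paper: transfer the $(PS)_c$-sequence from $\Psi$ to $\cJ$ via Lemma~\ref{lem:psi}(ii), establish boundedness on $\cN$, use the compact embedding of Lemma~\ref{Lemma:Sobolev} to upgrade weak to strong convergence, and then return to $\cU$ via the homeomorphism of Lemma~\ref{lem:U}(iii). The paper's version simply outsources the ``standard argument'' to \cite[Proposition~3.6]{cp}, whereas you spell it out explicitly; one small simplification you could make is that, since $\bar v_n\in\cN$, the identity $\partial_i\cJ(\bar v_n)v_{n,i}=0$ holds exactly, so the $o(1)$ term in your displayed equation is superfluous.
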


\begin{proof}
Let $(\bar{v}_n)$ be a $(PS)_c$-sequence for $\mathcal{J}$ with $\bar v_n\in\cN$. Then
\[
	\frac{m}{N}\Vert \bar{v}_n \Vert^2 = \mathcal{J}(\bar{v}_n) - \frac{1}{2_m^\ast} \mathcal{J}'(\bar{v}_n)\bar{v}_n \leq c(1 + \Vert \bar{v}_n\Vert)
\]
for some positive constant $c$ not depending on $\bar{v}_n$, so the sequence is bounded. A standard argument using Lemma \ref{Lemma:Sobolev}, as in \cite[Proposition 3.6]{cp}, shows that $(\bar{v}_n)$ contains a convergent subsequence. The statement of the lemma follows from Lemmas \ref{lem:psi}$(ii)$ and \ref{lem:U}$(iii)$.
\end{proof}

Given a nonempty subset $\mathcal{Z}$ of $\mathcal{T}$ such that $\bar{u}\in\mathcal{Z}$ if and only if $-\bar{u}\in\mathcal{Z}$, the \emph{genus of $\mathcal{Z}$}, denoted $\mathrm{genus}(\mathcal{Z})$, is the smallest integer $k\geq 1$ such that there exists an odd continuous function $\mathcal{Z}\rightarrow\mathbb{S}^{k-1}$ into the unit sphere $\mathbb{S}^{k-1}$ in $\R^k$. If no such $k$ exists, we define $\mathrm{genus}(\mathcal{Z})=\infty$; finally, we set $\mathrm{genus}(\emptyset)=0$.

\begin{lemma}
$\mathrm{genus}(\cU)=\infty$.
\end{lemma}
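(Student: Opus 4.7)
The plan is to produce, for every integer $k \geq 1$, an odd continuous map $\phi_k: \S^{k-1} \to \cU$. Since $\cU$ is $\pm$-symmetric (Lemma \ref{lem:psi}$(v)$), any odd continuous map $\cU \to \S^{k-2}$ would compose with $\phi_k$ to give an odd continuous map $\S^{k-1} \to \S^{k-2}$, contradicting Borsuk--Ulam. Hence $\mathrm{genus}(\cU) \geq k$ for every $k$, and the conclusion follows.

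The key observation that makes $\phi_k$ easy to construct is that whenever $\bar u = (u_1, \ldots, u_\ell) \in \cT$ has components with pairwise disjoint supports, all coupling integrals $\irn \lambda_{ij}\beta_{ij}|u_j|^{\alpha_{ij}}|u_i|^{\beta_{ij}}$ vanish and the system defining $\bar s_{\bar u}$ decouples into $\ell$ independent scalar equations of the form $s_i^2 = \mu_i s_i^{2^*_m}\irn|u_i|^{2^*_m}$, each admitting a unique positive solution, so $\bar u \in \cU$. Using the one-dimensional $\Gamma$-orbit space structure of $\sn$ provided by the map $q$ in \eqref{eq:orbit_map}, I will pick $k\ell$ pairwise disjoint compact subintervals $J_{i,j} \Subset (0,\pi)$ for $i=1,\ldots,\ell$, $j=1,\ldots,k$, and for each pair $(i,j)$ a nontrivial $\eta_{i,j} \in \cC_c^\infty(\mathrm{int}\,J_{i,j})$. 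Setting $\varphi_{i,j} := \iota(\eta_{i,j} \circ q) \in D^{m,2}(\rn)^\Gamma$ via the isometry in \eqref{eq:iota}, and rescaling so that $\|\varphi_{i,j}\|=1$, the supports of the $\varphi_{i,j}$ lie in pairwise disjoint $\Gamma$-invariant open subsets of $\rn$; in particular, for each fixed $i$, the family $\{\varphi_{i,1},\ldots,\varphi_{i,k}\}$ is orthonormal in $D^{m,2}(\rn)$ (disjoint supports force the integrands in \eqref{eq:scalar_product} to vanish pointwise).

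For $\alpha=(\alpha_1,\ldots,\alpha_k) \in \S^{k-1}$ I then set $u_i(\alpha) := \sum_{j=1}^k \alpha_j \varphi_{i,j}$ and $\phi_k(\alpha) := (u_1(\alpha),\ldots,u_\ell(\alpha))$. The map $\phi_k$ is linear, hence continuous and odd; orthonormality yields $\|u_i(\alpha)\|=1$ for every $i$, so $\phi_k(\alpha)\in\cT$; and the disjointness of supports across different values of $i$ places $\phi_k(\alpha)$ in $\cU$ by the key observation. There is no serious obstacle: the only mildly delicate point is choosing the $J_{i,j}$ to avoid the value $q(p_0) = \pi$ at the north pole so that stereographic transport keeps the supports of the $\varphi_{i,j}$ separated in $\rn$, which is trivial given the abundant room in the one-dimensional orbit space $(0,\pi)$.
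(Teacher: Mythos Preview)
Your proposal is correct and follows essentially the same route as the paper: construct $\Gamma$-invariant smooth functions with pairwise disjoint supports by pulling back bump functions on disjoint subintervals of the one-dimensional orbit space via $q$ and $\iota$ (this is precisely the construction of \cite[Lemma~3.2]{cp} that the paper cites), and then use linear combinations indexed by $\mathbb{S}^{k-1}$ to build an odd continuous map into $\cU$ (this is the content of \cite[Lemma~4.5]{cs}). Your write-up simply makes explicit what the paper leaves to those two references, including the verification that disjoint supports decouple the Nehari system and the care with $q(p_0)=\pi$.
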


\begin{proof}
As in \cite[Lemma 3.2]{cp} one constructs $\Gamma$-invariant functions in $\cC^\infty(\rn)$ with disjoint supports. Then, arguing as in \cite[ Lemma 4.5]{cs}, one shows that $\mathrm{genus}(\cU)=\infty$.
\end{proof}
\smallskip

\begin{proof}[Proof of Theorem \ref{thm:existence}]
Lemma \ref{lem:psi}$(iv)$ implies that $\cU$ is positively invariant under the negative pseudogradient flow of $\Psi$, so the usual deformation lemma holds true for $\Psi$, see e.g. \cite[Section II.3]{s} or \cite[Section 5.3]{w}. As $\Psi$ satisfies the $(PS)_c$-condition for every $c\in\r$, standard variational arguments show that $\Psi$ attains its minimum on $\cU$ at some $\bar u$. By Lemma \ref{lem:psi}$(iii)$ and the principle of symmetric criticality, $\bar s_{\bar u}\bar u$ is a least energy fully nontrivial solution of the system \eqref{eq:system}. Moreover, as $\Psi$ is even and $\mathrm{genus}(\cU)=\infty$, $\Psi$ has an unbounded sequence of critical values. Since $\Psi(\bar u)=\cJ(\bar s_{\bar u}\bar u)=\frac{m}{N}\|\bar s_{\bar u}\bar u\|^2$ by \eqref{eq:energy_nehari}, the system \eqref{eq:system} has an unbounded sequence of fully nontrivial solutions.
\end{proof}

\section{Segregation and optimal partitions}
\label{sec:segregation}

Let $\Gamma$ be as before and let $\Omega$ be a $\Gamma$-invariant open subset of $\mathbb{R}^N$. The solutions to the problem \eqref{eq:dirichlet} are the critical points of the energy functional $J_\Omega: D_0^{m,2}(\Omega)^\Gamma\rightarrow\R$ defined by
\begin{align*}
J_\Omega(v):=\frac{1}{2}  \|v\|^2-\frac{1}{{2^*_m}}\int_\Omega|v|^{2^*_m}.
\end{align*}
The nontrivial ones belong to the Nehari manifold
\begin{align*}
	\cM_\Omega:=&\{v\in D^{m,2}_0(\Omega)^\Gamma:v\neq 0,\;J_\Omega'(v)v=0\} \\
	=&\{v\in D^{m,2}_0(\Omega)^\Gamma:v\neq 0,\;\|v\|^2=\int_\Omega|v|^{2^*_m}\},
\end{align*}
which is a closed submanifold of $D^{m,2}_0(\Omega)^\Gamma$ of class $\cC^2$ and a natural constraint for $J_\o$. A minimizer for $J_\o$ on $\cM_\o$ is called a \emph{least energy $\Gamma$-invariant solution to \eqref{eq:dirichlet} in $\o$}. By standard arguments, using Lemma \ref{Lemma:Sobolev}, one sees that \eqref{eq:dirichlet} does have a least energy solution. So the quantity $c_\o^\Gamma$ defined in the introduction is
$$c_{\o}^\Gamma=\inf_{u\in\cM_\o}J_\o(u).$$

We begin by establishing some properties of optimal partitions. Let
$$\widetilde{q}:=q\circ\sigma^{-1}:\mathbb{R}^N\to[0,\pi],$$
where $\sigma$ is the stereographic projection and $q$ is the $\Gamma$-orbit map of $\S^N$ defined in \eqref{eq:orbit_map}. So, writing $\r^{N+1}=\r^{n_1}\times\r^{n_2}$, one has that $\widetilde q^{\,-1}(0) = \mathbb{S}^{n_1-1}\times\{0\}$ and $\widetilde q^{\,-1}(\pi) = \{0\}\times \r^{n_2-1}$.

\begin{lemma} \label{lem:tori}
Let $\ell\geq 2$ and $\{\Theta_1,\ldots,\Theta_\ell\}\in\mathcal{P}_\ell^\Gamma$ be a $(\Gamma,\ell)$-optimal partition for problem \eqref{p}. Then, the following statements hold true.
\begin{itemize}
\item[$(i)$] There exist $a_1,\ldots,a_{\ell-1}\in(0,\pi)$ such that
$$(0,\pi)\smallsetminus\bigcup_{i=1}^\ell\widetilde{q}\,(\Theta_i)=\{a_1,\ldots,a_{\ell-1}\}.$$
Therefore, after reordering,
\begin{align*}
\Omega_1 :=& \ \Theta_1\cup(\mathbb{S}^{n_1-1}\times\{0\})=\widetilde{q}\,^{-1}[0,a_1),\\
\Omega_i :=& \ \Theta_i=\widetilde{q}\,^{-1}(a_{i-1},a_i)\quad\text{for }\; i=2,\ldots,\ell-1,\\
\Omega_\ell :=& \ \Theta_\ell\cup(\{0\}\times\mathbb{R}^{n_2-1})=\widetilde{q}\,^{-1}(a_{\ell-1},\pi].
\end{align*}
\item[$(ii)$] $\Omega_1,\ldots,\Omega_\ell$ are smooth and connected, they satisfy items $(c_1)$ and $(c_2)$ of \emph{Theorem}~\ref{thm:main}, $\Omega_1,\ldots,\Omega_{\ell-1}$ are bounded, $\Omega_\ell$ is unbounded, $\overline{\Omega_1\cup\cdots\cup \Omega_\ell}=\mathbb{R}^{N}$, and $\{\Omega_1,\ldots,\Omega_\ell\}\in\mathcal{P}_\ell^\Gamma$ is a $(\Gamma,\ell)$-optimal partition for problem \eqref{p}.
\end{itemize}
\end{lemma}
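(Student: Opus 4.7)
The strategy is to pass to the one-dimensional orbit space $[0,\pi]$ via $\widetilde q$ and reduce the lemma to a combinatorial question about open intervals. By $\Gamma$-invariance, each $\Theta_i=\widetilde q^{\,-1}(V_i)$ for a unique nonempty open $V_i\subset[0,\pi]$; the $V_i$ are pairwise disjoint, and each decomposes into at most countably many connected open subintervals. The whole content of (i) is that each $V_i$ is a single interval and that $(0,\pi)\setminus\bigcup_iV_i$ consists of exactly $\ell-1$ points. Once this is known, (ii) will follow formally from the fiber structure of $\widetilde q$.

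Two companion facts drive the argument. \textbf{(A) Splitting:} if a $\Gamma$-invariant open $\Theta$ decomposes as $\Theta=U_1\sqcup U_2$ into $\Gamma$-invariant open pieces, then $c_\Theta^\Gamma=\min(c_{U_1}^\Gamma,c_{U_2}^\Gamma)$. Writing $c_U^\Gamma=\frac{m}{N}S_U^{N/(2m)}$ with $S_U=\inf\|u\|^2/\|u\|_{2_m^*}^2$ on $D_0^{m,2}(U)^\Gamma\setminus\{0\}$, the inequality $S_\Theta\leq\min$ is by restriction of test functions, while for the reverse one writes $u=u_1+u_2$ with disjoint supports and uses $\|u\|^2=\|u_1\|^2+\|u_2\|^2$, $\|u\|_{2_m^*}^{2_m^*}=\|u_1\|_{2_m^*}^{2_m^*}+\|u_2\|_{2_m^*}^{2_m^*}$ together with the concavity of $t\mapsto t^{2/2_m^*}$ on $[0,1]$ (since $2/2_m^*<1$), which forces $S_\Theta\geq\min(S_{U_1},S_{U_2})$. \textbf{(B) Strict monotonicity:} if $U\subsetneq\Omega$ are $\Gamma$-invariant open sets with $\Omega\setminus\overline U$ of nonempty interior, then $c_\Omega^\Gamma<c_U^\Gamma$. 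The weak inequality is by inclusion. Were equality to hold, a minimizer $v\in\cM_\Omega$ with $v\in D_0^{m,2}(U)^\Gamma$ would also minimize $J_\Omega$ on $\cM_\Omega$, hence solve $(-\Delta)^mv=|v|^{2_m^*-2}v$ in $\Omega$; by Proposition~\ref{prop:continuity} and elliptic bootstrapping $v$ is smooth in $\Omega$, but $v\equiv 0$ on the open set $\Omega\setminus\overline U$ (extending by zero), so $v$ vanishes to infinite order there, and unique continuation for higher-order elliptic equations forces $v\equiv 0$, a contradiction.

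With (A) and (B) in hand, I prove (i) by contradiction. For each $i$ let $V_i^*$ be a component of $V_i$ realizing the minimum in (A), so $c_{\widetilde q^{\,-1}(V_i^*)}^\Gamma=c_{\Theta_i}^\Gamma$. Order the intervals $V_i^*$ from left to right along $[0,\pi]$, pick separation points $a_1<\cdots<a_{\ell-1}$ in the gaps between consecutive $V_i^*$'s, and define $\Omega_k':=\widetilde q^{\,-1}(I_k)$ with $I_1=[0,a_1)$, $I_k=(a_{k-1},a_k)$ for $2\leq k\leq\ell-1$, and $I_\ell=(a_{\ell-1},\pi]$. Each $\Omega_k'\supset\widetilde q^{\,-1}(V_k^*)$. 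If some $V_i$ had more than one component, or if $(0,\pi)\setminus\bigcup_iV_i$ contained an open subinterval, then for at least one $k$ this inclusion would be strict with nonempty interior difference, and (B) would give $c_{\Omega_k'}^\Gamma<c_{\Theta_k}^\Gamma$; summing yields $\sum_kc_{\Omega_k'}^\Gamma<\sum_kc_{\Theta_k}^\Gamma$, contradicting optimality. Therefore each $V_i$ is a single interval and $(0,\pi)\setminus\bigcup_iV_i$ has empty interior, hence equals the finite set $\{a_1,\ldots,a_{\ell-1}\}$ as claimed in (i).

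The remaining assertions in (ii) follow from (i) essentially by bookkeeping. Each $\Omega_i$ is the $\widetilde q$-preimage of an open interval of $[0,\pi]$, and $\widetilde q$ is a smooth submersion away from the singular orbits $\widetilde q^{\,-1}(0)\cong\mathbb S^{n_1-1}$ and $\widetilde q^{\,-1}(\pi)\cong\{0\}\times\mathbb R^{n_2-1}$; this yields smoothness and connectedness, and the diffeomorphism types in $(c_1)$ and $(c_2)$ come from the tubular neighborhood structure at the two singular fibers and the product structure $\widetilde q^{\,-1}(I)\cong\mathbb S^{n_1-1}\times\mathbb S^{n_2-1}\times I$ on interior intervals. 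Boundedness of $\Omega_1,\ldots,\Omega_{\ell-1}$ and unboundedness of $\Omega_\ell$ follow from their position relative to $\widetilde q^{\,-1}(\pi)$; density $\overline{\Omega_1\cup\cdots\cup\Omega_\ell}=\rn$ is immediate; and optimality of $\{\Omega_i\}$ holds because $\Omega_i$ and $\Theta_i$ differ at most by singular orbits, which are removable in the $\Gamma$-invariant Sobolev space. The \emph{main obstacle} is fact (B): strict monotonicity of $c^\Gamma$ hinges on unique continuation for the nonlinear polyharmonic equation, which for $m\geq 2$ is substantially subtler than its second-order analogue and requires Aronszajn--Cordes-type results for higher-order operators together with the $C^{m-1}$ regularity of $\Gamma$-invariant solutions to place the nonlinear potential in an admissible class.
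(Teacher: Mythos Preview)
Your proof is correct and rests on the same core idea as the paper's: strict monotonicity of $c_\cdot^\Gamma$ under domain enlargement (your fact~(B)), obtained from elliptic regularity plus unique continuation for $(-\Delta)^m$ (the paper cites \cite{l} and \cite{lin,protter} for these). The paper states (B) only for adjacent annuli and then asserts the conclusion of (i) directly, whereas you spell out the combinatorial reduction via the auxiliary splitting lemma~(A) and an explicit competing partition; for~(ii), the paper uses the simpler observation $\Theta_i\subset\Omega_i\Rightarrow c_{\Omega_i}^\Gamma\leq c_{\Theta_i}^\Gamma$ rather than removability of the singular orbits.
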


\begin{proof}
$(i):$ Let $a,b,c\in(0,\pi)$ with $a<b<c$ and set $\Lambda_1:=\widetilde{q}\,^{-1}(a,b)$,\; $\Lambda_2:=\widetilde{q}\,^{-1}(b,c)$,\; $\Lambda=\widetilde{q}\,^{-1}(a,c)$. As $\Lambda_i\subset\Lambda$, we have that $c_{\Lambda}^\Gamma \leq \min\{c_{\Lambda_1}^\Gamma,c_{\Lambda_2}^\Gamma\}$. We claim that 
$$c_{\Lambda}^\Gamma < \min\{c_{\Lambda_1}^\Gamma,c_{\Lambda_2}^\Gamma\}. $$
Indeed, if $c_{\Lambda}^\Gamma=c_{\Lambda_1}^\Gamma$ then, taking a least energy $\Gamma$-invariant solution to \eqref{eq:dirichlet} in $\Lambda_1$ and extending it by $0$ in $\Lambda\smallsetminus\Lambda_1$ we obtain a least energy $\Gamma$-invariant solution $u$ to \eqref{eq:dirichlet} in $\Lambda$. Then, $u\in\cC^{2m}(\Lambda)$ by \cite{l} and it vanishes in $\Lambda\smallsetminus\Lambda_1$, contradicting the unique continuation principle \cite{lin,protter}.

Therefore, if $\{\Theta_1,\ldots,\Theta_\ell\}\in\mathcal{P}_\ell^\Gamma$ is a $(\Gamma,\ell)$-optimal partition for problem \eqref{p}, then $(0,\pi)\smallsetminus\bigcup_{i=1}^\ell\widetilde{q}\,(\Theta_i)$ must consist of precisely $\ell-1$ points.

$(ii):$ Clearly, $\Omega_1,\ldots,\Omega_\ell$ are smooth and connected and satisfy statements $(c_1)$ and $(c_2)$ of Theorem \ref{thm:main}. Moreover, $\Omega_1,\ldots,$ $\Omega_{\ell-1}$ are bounded, $\Omega_\ell$ is unbounded, $\mathbb{R}^{N}=\overline{\Omega_1\cup\cdots\cup \Omega_\ell}$, and $\{\Omega_1,\ldots,\Omega_\ell\}\in\mathcal{P}_\ell^\Gamma$. 

As $\Theta_i\subset\o_i$ we have that $c_{\Omega_i}^\Gamma\leq c_{\Theta_i}^\Gamma$ for all $i$. So, as $\{\Theta_1,\ldots,\Theta_\ell\}$ is a $(\Gamma,\ell)$-optimal partition, we conclude that $\{\Omega_1,\ldots,\Omega_\ell\}$ is a $(\Gamma,\ell)$-optimal partition.
\end{proof}
\smallskip

\begin{proof}[Proof of Theorem \ref{thm:main}] Fix $\mu_i=1$ in \eqref{eq:system} for each $i=1,\ldots,\ell$, and let $\lambda_{ij,k}\to-\infty$ as $k\to\infty$. To highlight the role of $\lambda_{ij,k}$, we write $\mathcal{J}_k$ and $\mathcal{N}_k$ for the functional and the set associated to the system \eqref{eq:system} with $\lambda_{ij}$ replaced by $\lambda_{ij,k}$, introduced in Section~\ref{sec:system}. Let $\bar u_k=(u_{k,1},\ldots,u_{k,\ell})\in\cN_k$ be such that
$$c_k^\Gamma:= \inf_{\mathcal{N}_k} \mathcal{J}_k =\mathcal{J}_k(\bar u_k)=\frac{m}{N}\sum_{i=1}^\ell\|u_{k,i}\|^2.$$
Let
\begin{align*}
\mathcal{N}_0:=\{(v_1,\ldots,v_\ell)\in\mathcal{H}:\,&v_i\neq 0,\;\|v_i\|^2=\irn|v_i|^{{2^*_m}}, \text{ and }v_iv_j=0\text{ a.e. in }\mathbb{R}^N \text{ if }i\neq j\}.
\end{align*}
Then, $\mathcal{N}_0\subset\mathcal{N}_k$ for all $k\in\mathbb{N}$ and, therefore, 
$$0<c_k^\Gamma\leq c_0^\Gamma:=\inf\left\{\frac{m}{N}\sum_{i=1}^\ell\|v_i\|^2:(v_1,\ldots,v_\ell)\in\mathcal{N}_0\right\}<\infty.$$
So, after passing to a subsequence, using Lemma~\ref{Lemma:Sobolev}, we get that $u_{k,i} \rightharpoonup u_{\infty,i}$ weakly in $D_{0}^{m,2}(\mathbb{R}^N)^\Gamma$, $u_{k,i} \to u_{\infty,i}$ strongly in $L^{{2^*_m}}(\mathbb{R}^N)$, and $u_{k,i} \to u_{\infty,i}$ a.e. in $\mathbb{R}^N$ for each $i=1,\ldots,\ell$.  Moreover, as $\partial_i\mathcal{J}_k(\bar u_k)[u_{k,i}]=0$, we have for each $j\neq i$,
\begin{align*}
0\leq\irn\beta_{ij}|u_{k,j}|^{\alpha_{ij}}|u_{k,i}|^{\beta_{ij}}\leq \frac{1}{-\lambda_{ij,k}}\irn|u_{k,i}|^{{2^*_m}}\leq \frac{C}{-\lambda_{ij,k}}.
\end{align*}
Then, Fatou's lemma yields 
$$0 \leq \irn|u_{\infty,j}|^{\alpha_{ij}}|u_{\infty,i}|^{\beta_{ij}} \leq \liminf_{k \to \infty} \irn|u_{k,j}|^{\alpha_{ij}}|u_{k,i}|^{\beta_{ij}} = 0.$$
Hence, $u_{\infty,j} u_{\infty,i} = 0$ a.e. in $\mathbb{R}^N$. By Lemma \ref{lem:away_from_0},
$$0<d_0 \leq \|u_{k,i}\|^2 \leq\irn |u_{k,i}|^{{2^*_m}}\qquad\text{for all \ }k\in\mathbb{N},\;i=1,\ldots,\ell,$$
and, as $u_{k,i} \to u_{\infty,i}$ strongly in $L^{{2^*_m}}(\mathbb{R}^N)$ and $u_{k,i} \rightharpoonup u_{\infty,i}$ weakly in $D^{m,2}(\mathbb{R}^N)$, we get
\begin{equation} \label{eq:comparison2}
0<\|u_{\infty,i}\|^2 \leq \irn|u_{\infty,i}|^{{2^*_m}}\qquad\text{for every \ }i=1,\ldots,\ell.
\end{equation}
 Since $u_{\infty,i}\neq 0$, there is a unique $t_i\in(0,\infty)$ such that $\|t_iu_{\infty,i}\|_0^2 = \irn|t_iu_{\infty,i}|^{{2^*_m}}$. So $(t_1u_{\infty,1},\ldots,t_\ell u_{\infty,\ell})\in \mathcal{N}_0$. The inequality \eqref{eq:comparison2} implies that $t_i\in (0,1]$. Therefore,
\begin{align*}
c_0^\Gamma &\leq \frac{m}{N}\sum_{i=1}^\ell\|t_iu_{\infty,i}\|^2 \leq \frac{m}{N}\sum_{i=1}^\ell\|u_{\infty,i}\|^2\leq \frac{m}{N}\liminf_{k\to\infty}\sum_{i=1}^\ell\|u_{k,i}\|^2=\liminf_{k\to\infty} c_k^\Gamma \leq c_0^\Gamma.
\end{align*}
It follows that $u_{k,i} \to u_{\infty,i}$ strongly in $D^{m,2}(\mathbb{R}^N)^\Gamma$ and $t_i=1$, yielding 
\begin{equation}\label{eq:limit}
\|u_{\infty,i}\|^2 = \irn|u_{\infty,i}|^{{2^*_m}},\qquad\text{and}\qquad\frac{m}{N}\sum_{i=1}^\ell\|u_{\infty,i}\|^2 =c_0^\Gamma.
\end{equation}

Set $Y_1:=\mathbb{S}^{n_1-1}\times\{0\}$, $Y_2:=\{0\}\times\mathbb{R}^{n_2-1}$, and $Y_0:=Y_1\cup Y_2$. Proposition~\ref{prop:continuity}, together with Lemma~\ref{lem:isometry2}, imply that $u_{\infty,i}|_{\mathbb{R}^N\smallsetminus Y_0}\in\cC^{m-1}(\rn\smallsetminus Y_0)$. Consequently, 
\begin{align*}
\Theta_i:=\{x\in\R^N\smallsetminus Y_0:u_{\infty,i}(x)\neq 0\} 
\end{align*}
is a $\Gamma$-invariant nonempty open subset of $\mathbb{R}^N$ and, as $u_{\infty,i}u_{\infty,j}=0$, we have that $\Theta_i\cap\Theta_j=\emptyset$ if $i\neq j$.  We set $\Omega_i:=\operatorname{int}(\overline{\Theta_i})$. Then, every $\Omega_i$ is a nonempty  $\Gamma$-invariant open smooth subset of $\rn$, $\Omega_i\cap\Omega_j=\emptyset$ if $i\neq j$, and $u_{\infty,i}(x)=0$ in $\rn\smallsetminus\Omega_i$. By Lemma \ref{A}, $u_{\infty,i}\in D_0^{1,2}(\Omega_i)^\Gamma$ and, by \eqref{eq:limit}, $u_{\infty,i}\in\cM_{\Omega_i}$ and
\begin{equation*}
\sum_{i=1}^\ell c_{\Omega_i}^\Gamma\leq\frac{m}{N}\sum_{i=1}^\ell\|u_{\infty,i}\|^2 = c_0^\Gamma \leq \inf_{(\Phi_1,\ldots,\Phi_\ell)\in\mathcal{P}_\ell^\Gamma}\;\sum_{i=1}^\ell c_{\Phi_i}^\Gamma.
\end{equation*}
This shows that $\{\o_1,\ldots,\o_\ell\}$ is a $(\Gamma,\ell)$-optimal partition for the system \eqref{eq:system} and that $u_{\infty,i}$ is a least energy $\Gamma$-invariant solution to \eqref{eq:dirichlet} in $\o_i$. Thus, by \cite{l}, $u_{\infty,i}\in\cC^{2m,\alpha}(\overline\o_i)$ for $\alpha\in(0,1)$. This concludes the proof of statements $(a)$ and $(b)$. Statement $(c)$ follows from Lemma \ref{lem:tori}.

\end{proof}

\appendix
 
 \section{Auxiliary results}
 
\begin{lemma}\label{A}
Let $\Omega$ be an open subset of $\rn$ of class $\cC^0$. Then 
\begin{align*}
 \widetilde D_0^{m,2}(\Omega):=\{v\in D^{m,2}(\R^N)\::\:v=0\text{ in }\R^N\smallsetminus\Omega\}=D_0^{m,2}(\Omega).
 \end{align*}
\end{lemma}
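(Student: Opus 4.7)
The plan is to prove the two inclusions separately. The inclusion $D_0^{m,2}(\Omega)\subset\widetilde D_0^{m,2}(\Omega)$ is the easy direction: given $v$ with $\varphi_n\to v$ in $D^{m,2}(\R^N)$ for some $\varphi_n\in\cC_c^\infty(\Omega)$, the critical Sobolev embedding $D^{m,2}(\R^N)\hookrightarrow L^{2_m^*}(\R^N)$ yields $\varphi_n\to v$ in $L^{2_m^*}(\R^N)$, hence a.e.\ along a subsequence; since each $\varphi_n$ vanishes off $\Omega$, so does $v$.

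For the reverse inclusion, I would first truncate. Fix $\eta\in\cC_c^\infty(\R^N)$ with $\eta\equiv 1$ near the origin and set $\eta_R(x):=\eta(x/R)$. Expanding $\partial^\alpha(\eta_R v)$ by Leibniz, the term with no derivative on $\eta_R$ converges to $\partial^\alpha v$ in $L^2$ by dominated convergence; the cross terms, scaled by $R^{-|\beta|}$ and supported in $\{|x|\sim R\}$, are controlled by Hardy--Rellich-type inequalities $\int |D^{m-|\beta|}v|^2/|x|^{2|\beta|}\,dx\lesssim\|v\|^2$ and vanish in the limit. Thus $\eta_R v\to v$ in $D^{m,2}(\R^N)$, and since $\eta_R v$ still vanishes outside $\Omega$, it is enough to approximate compactly supported elements of $\widetilde D_0^{m,2}(\Omega)$ by functions of $\cC_c^\infty(\Omega)$.

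Assuming now that $\supp v$ is compact, I would cover $\supp v\cap \partial\Omega$ by finitely many open sets $U_1,\dots,U_K$ in which, after a rigid motion, $\Omega\cap U_k=\{x_N>\gamma_k(x')\}$ for some continuous $\gamma_k$ (this is where the $\cC^0$ hypothesis is used), and add one set $U_0$ with $\overline{U_0}\subset\Omega$ covering the rest of $\supp v$. Let $\{\psi_k\}_{k=0}^K$ be a smooth partition of unity subordinate to this cover. Each $\psi_k v$ is compactly supported, lies in $W^{m,2}(\R^N)$, and vanishes outside $\Omega$, so it suffices to approximate each $\psi_k v$ separately in $\cC_c^\infty(\Omega)$. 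The piece $\psi_0 v$ has support compactly contained in $\Omega$, so ordinary mollification works. For $k\geq 1$, translate in the normal direction: define $w_\tau(x):=(\psi_k v)(x-\tau e_N)$ for small $\tau>0$. Since $\psi_k v=0$ for $x_N<\gamma_k(x')$, the support of $w_\tau$ lies in $\{x_N\geq \gamma_k(x')+\tau\}$. A compactness argument using continuity of $\gamma_k$ on a compact set shows that this closed set is at a strictly positive distance from $\partial\Omega$; mollifying $w_\tau$ with a kernel of radius smaller than that distance produces a function in $\cC_c^\infty(\Omega)$. Continuity of translation in $W^{m,2}$ and convergence of mollification in $W^{m,2}$ allow a diagonal extraction yielding $\cC_c^\infty(\Omega)$-approximations of $\psi_k v$ in the $D^{m,2}$ norm.

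The main obstacle is precisely the $\cC^0$ (rather than Lipschitz) regularity of $\partial\Omega$. For a Lipschitz graph one separates $\supp w_\tau$ from $\partial\Omega$ quantitatively by $\tau/\sqrt{1+L^2}$, but for a merely continuous graph one only obtains positivity of the distance qualitatively, through the compactness argument above. This forces the local charts $U_k$ and cutoffs to be chosen so that the relevant portions of $\partial\Omega$ sit in a compact set on which $\gamma_k$ is uniformly continuous; once this is arranged the translation--mollification scheme proceeds as usual, and this is where the regularity hypothesis on $\Omega$ enters the proof in an essential way.
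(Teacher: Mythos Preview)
Your argument is correct but takes a different, more self-contained route than the paper. The paper's proof is much shorter because it invokes a known result: for a $\cC^0$ domain, one has $\widetilde H_0^m(\Omega)=H_0^m(\Omega)$ (Grisvard, \emph{Elliptic problems in nonsmooth domains}, Theorem~1.4.2.2). When $\Omega$ is bounded the $D^{m,2}$ and $H^m$ norms are equivalent, so the result follows immediately from Grisvard's theorem. For unbounded $\Omega$ the paper multiplies $u$ by a cutoff $\varphi_n(x)=\varphi(x/n)$, observes that $\varphi_n u$ vanishes outside the bounded $\cC^0$ set $\Omega\cap B_{2n}(0)$, applies the bounded case, and then argues only that the sequence $(\varphi_n u)$ is \emph{bounded} in $D_0^{m,2}(\Omega)$; weak closedness of $D_0^{m,2}(\Omega)$ then gives $u\in D_0^{m,2}(\Omega)$.

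Your approach differs in two places. First, you prove \emph{strong} convergence $\eta_R v\to v$ in $D^{m,2}$ via Hardy--Rellich inequalities, which is more than the paper needs but perfectly valid (and avoids the mild issue of checking that $\Omega\cap B_{2n}$ is itself $\cC^0$). Second, instead of citing Grisvard you reproduce the standard proof of his theorem: partition of unity, inward translation along the graph direction, then mollification. What you gain is a self-contained argument that makes transparent exactly where the $\cC^0$ hypothesis enters; what the paper gains is brevity. Both are sound.
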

\begin{proof} That $D_0^{m,2}(\Omega)\subset \widetilde D_0^{m,2}(\Omega)$ is clear. 

Consider the space $\widetilde H_0^{m}(\Omega):=\{u\in H^m(\R^N)\::\: u=0 \text{ in } \R^N\smallsetminus\Omega\}$ endowed with the standard Sobolev norm. If $\Omega$ is an open subset of $\rn$ of class $\cC^0$, then $\widetilde H^{m}_0(\Omega)=H^m_0(\Omega)$, see \cite[Thm 1.4.2.2]{g}. 

If $\Omega$ is bounded, the norm induced by the scalar product \eqref{eq:scalar_product} is equivalent to the standard Sobolev norm of $H^m_0(\Omega)$. Therefore, $\widetilde D^{m,2}_0(\Omega)=\widetilde H^{m}_0(\Omega)=H^m_0(\Omega)=D^{m,2}_0(\Omega)$.  

If $\Omega$ is unbounded we take $\varphi\in \cC_c^\infty(\R^N)$ be such that $\varphi=1$ in $B_1(0)$ and $\varphi=0$ in $\R^N\smallsetminus B_2(0)$ and set $\varphi_n(x):=\varphi(\frac{x}{n})$. If $u\in \widetilde D^{m,2}_0(\Omega)$, then $\varphi_n u$ vanishes in the complement of $\Omega_n:=\Omega\cap B_{2n}(0)$ which is of class $\cC^0$. So, by the previous case, $\varphi_n u\in D^{m,2}_0(\Omega_n)\subset D^{m,2}_0(\Omega)$ for all $n\in\N$.  It is not hard to see that $(\varphi_n u)$ is bounded in $D^{m,2}_0(\Omega)$.  But then $\varphi_n u\rightharpoonup u$ weakly in $D^{m,2}_0(\Omega)$ and, since this space is weakly closed, we conclude that $u\in D^{m,2}_0(\Omega)$. 
\end{proof}

\begin{lemma}\label{bb:lemma}
For every $\varepsilon>0$ and $i\in\N$ there are $\mu=\mu(\varepsilon,i)>1$ and $\eta=\eta(\eps,i)>0$  such that, for every $w\in C^\infty(0,\pi)$,
\begin{align}\label{i1lemma}
\frac{1}{4}|\sL^{i/2} w|^2 h&\geq \eta\left(|w^{(i)}|^2 - \mu\sum_{j=1}^{i - 1}|w^{(j)}|^2\right)\quad \text{ in }(\varepsilon,\pi-\varepsilon)\text{ for $i$ even},
\end{align}
and 
\begin{align}\label{i2lemma}
|(\sL^{(i-1)/2} w)'|^2 h &\geq \eta
\left(|w^{(i)}|^2 -\mu\sum_{j=1}^{i-1}|w^{(j)}|^2\right)\quad \text{ in }(\varepsilon,\pi-\varepsilon)\text{ for $i$ odd}.
\end{align}
\end{lemma}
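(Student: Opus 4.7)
\bigskip

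\noindent\textbf{Proof plan for Lemma~\ref{bb:lemma}.}

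The plan is to expand $\sL^{i/2}w$ (respectively $(\sL^{(i-1)/2}w)'$) as a linear differential operator of order $i$ acting on $w$, isolate its leading term $c_i\, w^{(i)}$, and control the remainder by the lower-order derivatives. The restriction to $(\varepsilon,\pi-\varepsilon)$ is essential because the coefficient $\phi(t)$ in the definition of $\sL$ contains $1/\sin t$ and therefore blows up at $t=0,\pi$; away from these endpoints, $\phi$ is $\cC^\infty$ and $h$ is uniformly bounded above and below by positive constants depending on $\varepsilon$.

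The first step is a structural expansion. Since $\sL=4\,D^2+\phi(t)\,D$ (with $D=d/dt$) has no zeroth-order term, a direct induction on the number of times $\sL$ is applied shows that, for $i$ even,
\[
\sL^{i/2} w \;=\; 4^{i/2}\, w^{(i)} \;+\; \sum_{j=1}^{i-1} p_{i,j}(t)\, w^{(j)},
\]
where each $p_{i,j}$ is a polynomial in $\phi,\phi',\dots,\phi^{(i-2)}$, hence $\cC^\infty$ on $(\varepsilon,\pi-\varepsilon)$. For $i$ odd, applying the same expansion to $\sL^{(i-1)/2}w$ and then differentiating once raises the order by exactly one and yields
\[
\bigl(\sL^{(i-1)/2} w\bigr)' \;=\; 4^{(i-1)/2}\, w^{(i)} \;+\; \sum_{j=1}^{i-1} q_{i,j}(t)\, w^{(j)},
\]
again with $q_{i,j}\in \cC^\infty(\varepsilon,\pi-\varepsilon)$.

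The second step is the elementary inequality $(A+B)^2\geq \tfrac12 A^2 - B^2$, applied with $A$ the leading term and $B$ the lower-order remainder. Combined with Cauchy--Schwarz on the finite sum $B$, this gives, on $(\varepsilon,\pi-\varepsilon)$,
\[
\bigl|\sL^{i/2}w\bigr|^2 \;\geq\; \tfrac{1}{2}\, 4^{i}\, |w^{(i)}|^2 \;-\; (i-1)\!\!\sum_{j=1}^{i-1}\! \|p_{i,j}\|_{L^\infty(\varepsilon,\pi-\varepsilon)}^2\, |w^{(j)}|^2,
\]
and similarly for $\bigl|(\sL^{(i-1)/2}w)'\bigr|^2$ with $q_{i,j}$ in place of $p_{i,j}$. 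All these $L^\infty$ norms are finite precisely because we are bounded away from $0$ and $\pi$.

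The final step is to multiply by $h/4$ in the even case (resp.\ by $h$ in the odd case) and use $0<h_{\min}(\varepsilon)\leq h(t)\leq h_{\max}(\varepsilon)$ on $(\varepsilon,\pi-\varepsilon)$. Setting $\eta$ equal to the resulting coefficient in front of $|w^{(i)}|^2$ and taking $\mu$ to be the (finite) ratio between the coefficient of $\sum_{j=1}^{i-1}|w^{(j)}|^2$ and $\eta$, enlarged if necessary so that $\mu>1$, yields \eqref{i1lemma} and \eqref{i2lemma}. There is no substantive analytical obstacle here: the work is entirely combinatorial bookkeeping, and the only thing to watch is the uniform control of $\phi$ and its derivatives, which is automatic on any closed subinterval of $(0,\pi)$.
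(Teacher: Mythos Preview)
Your proposal is correct and follows essentially the same route as the paper: expand $\sL^{i/2}w$ (resp.\ $(\sL^{(i-1)/2}w)'$) as $4^{i/2}w^{(i)}$ plus a lower-order remainder with coefficients smooth on $(\varepsilon,\pi-\varepsilon)$, separate the two via an elementary quadratic inequality, and absorb the factor $h$ using its positive upper and lower bounds on the subinterval. Your inductive justification of the expansion is in fact cleaner than the paper's appeal to the binomial theorem (which does not literally apply since $4D^2$ and $\phi D$ fail to commute), but the structure of the argument and the resulting estimates are the same.
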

\begin{proof}
Let $i\in\N$ be even and recall that $\sL^{i/2}w =\left(4\frac{d^2}{\;dt^2} + \phi(t)\frac{d}{\;dt}\right)^{i/2}w$, where $\phi\in C^{\infty}(0,\pi)$ is given by \eqref{H}. Then, $\sL^{i/2} w =\left(4\frac{d^2}{\;dt^2} + \phi(t)\frac{d}{\;dt}\right)^{i/2}w =4^{i/2} w^{(i)}+R_i,$ where, by the binomial theorem,
\begin{align*}
R_i:=\sum_{k=0}^{{i/2}-1}\binom{{i/2}}{k} 4^k\left(\phi(t)\frac{d}{\;dt}\right)^{{i/2}-k}w^{(2k)}.
\end{align*}
Fix $\varepsilon>0$. Since $\phi\in \cC^{\infty}(0,\pi)$, there is $\mu_1=\mu_1(\varepsilon,i)>0$  such that $|R_i|\leq \mu_1\sum_{k=1}^{i - 1}|w^{(k)}|$ in $(\varepsilon,\pi-\varepsilon).$  Using that $ab\leq \frac{1}{2}(a^2+b^2)$ for $a,b\in\R$, we obtain that
\begin{align*}
|\sL^{i/2} w|^2
&=4^{i}|w^{(i)}|^2+2^{i+1} w^{(i)}R_i+|R_i|^2\geq 4^{i} w^{(i)}-2^{i}(|w^{(i)}|^2+|R_i|^2)=(4^{i}-2^{i}) |w^{(i)}|^2-2^{i}|R_i|^2\\
&\geq(4^{i}-2^{i}) |w^{(i)}|^2-2^{i}\mu_1^2\left(\sum_{k=1}^{i - 1}|w^{(k)}|\right)^2
\geq (4^{i}-2^{i}) |w^{(i)}|^2 -\mu_2\sum_{k=1}^{i - 1}|w^{(k)}|^2
\end{align*}
in $(\varepsilon,\pi-\varepsilon)$ for some $\mu_2=\mu_2(\eps,i)>0$.  As a consequence, since $h\in C^\infty(0,\pi)$ is positive (see \eqref{h}),
\begin{align*}
\frac{1}{4}|\sL^{i/2} w|^2 h
&\geq (4^{i-1}-2^{i-2})\min_{[\eps,\pi-\eps]}h\left(|w^{(i)}|^2 - \mu\sum_{j=1}^{i - 1}|w^{(j)}|^2\right)\quad \text{ in }(\varepsilon,\pi-\varepsilon),
\end{align*}
for some $\mu=\mu(\eps,i)>1$ and \eqref{i1lemma} follows. Inequality \eqref{i2lemma} for $i$ odd follows similarly. 
\end{proof}

\end{document}